\newcommand{\vs}{\vspace{2mm}}
\newcommand{\mods}{\ (\text{mod } n)}
\newtheorem{thm}{Theorem}[subsection]
\newtheorem{cor}[thm]{Corollary}
\newtheorem{lem}[thm]{Lemma}
\newtheorem{quest}[thm]{Question}
\theoremstyle{definition}
\newtheorem{defn}[thm]{Definition}
\newtheorem{defns}[thm]{Definitions}
\newtheorem{exmp}[thm]{Example}
\theoremstyle{remark}
\begin{document}

\begin{center}
\huge{An Exploration of the Symmetry Groups of Certain Configurations of Points}\\
\vspace{2mm}
\large{Luke Boyer and Nick Payne}\\
\vspace{1mm}
\large{July 6, 2021}\\
\vspace{5mm}
\end{center}
\begin{adjustwidth}{20pt}{20pt}
\abs{
We start by introducing the basics of configurations of points and lines, and then move into discussing symmetry groups of these configurations.  Specifically, we explore how we might classify the symmetries of $(9_3)$ and $(10_3)$ geometric configurations, given the graph automorphisms of their underlying set-configurations.  Finally, we show how a specific class of combinatorial configurations called generalized cyclic configurations can be explored using this terminology, and give several interesting geometric results.  
}
\end{adjustwidth}

\tableofcontents
\section{Introduction}
Configurations are a collection of points and lines where each point is incident with the same number of lines (and vice versa). A fundamental question in the study of configurations is \emph{geometric realizability}. That is to say, given an abstract incidence structure (referred to as a set-configuration), do we know when we are able to draw (realize) a set of points and lines with the same incidence structure? Specifically, do we know when a structure of a set-configuration is exhibited in its geometric realizations? The answer to this question lies in understanding the symmetric structures of set-configurations. As one might expect, the set of all symmetries for a given configuration form a group which acts on the set of points (and lines) of a configuration.  Towards the beginning of this paper, we attempt to classify subgroups of some historically significant set-configurations and observe when geometric realizations with visually apparent symmetries of the same nature exist. Later on, in the second topic, we explore a specific type of set-configuration whose structure is cyclic in nature.  As it turns out, whether these configurations are geometrically realizable is depended on several parameters, even something as straightforward as the number of incidences per line.  In addition, we provide some commentary on how these realizations were done in the past, as well as provide our own novel observations and results.
\subsection{Acknowledgements}\hfill\\
We would like to thank our incredible mentor Ian Dumais for the countless hours he spent with us reviewing material and providing insight, and would like to thank Shane Calle for his thoughtful discussion.  Additionally, we are amazingly grateful of Vance Blankers and Matej Penciak for organizing and hosting this program, as well as for being there to answer any question we might have. \\
We are grateful for the RTG grant Algebraic Geometry and Representation Theory at Northeastern University,  DMS–1645877, which supported this research experience. We also thank the Northeastern Department of Mathematics and the Northeastern College of Science for additional support.
\hfill\\
\section{Basic Definitions and Concepts\footnote{Unless otherwise noted, definitions and theorems in this section are from \cite{Grunbaum}}}\hfill\\
While there are many different definitions of ``configuration", a configuration of points and lines can be defined as follows:
\begin{defn} \label{def:config}
     A \textbf{configuration} $(p_q,n_k)$  is a family of points and lines such that, for $p,q,n,k \in \mathbb{N}$:
    \begin{itemize}
    \item Each of the \textit{p} points is incident with precisely \textit{q} of the \textit{n} lines.
    \item Each of these \textit{n} lines is incident with precisely \textit{k} of the \textit{p} points.
\end{itemize} 
\end{defn}

For the remainder of this paper, we limit ourselves to $(n_3)$ configurations, i.e. configurations that have $n$ lines incident with $3$ points, and $n$ points incident with $3$ lines.  Additionally, it is important to note that, depending on context, the usage of ``points" and ``lines" in Definition 
\ref{def:config} is not necessarily accurate, which leads to the following set of definitions.
\begin{defns} Configurations can be classified as one of three types:
\begin{itemize}
    \item A \textbf{Geometric Configuration} is an image of points and lines in the (extended) Euclidean plane.
    \item A \textbf{Topological Configuration} is an image of points and pseudo-lines in the (extended) Euclidean plane.
    \item A \textbf{Combinatorial Configuration} is an abstract representation listing the incidences of ``points" for each ``line". In this case, points can be referred to as \textbf{marks}, and lines referred to as \textbf{blocks}.
\end{itemize}
\end{defns}
As one might expect, geometric configurations are also topological configurations, and topological configurations are also combinatorial configuration.  If a geometric or topological configuration is formed from a combinatorial configuration, we say that it is \textbf{underlied by}, and a \textbf{realization of}, the combinatorial configuration.

\begin{figure}[H]
\subfloat{\resizebox{.5\linewidth}{!}{
    \begin{tabular}{|c c c c c c c c c|}
    \hline
    1 & 1 & 1 & 2 & 2 & 2 & 3 & 3 & 3\\ 
    \hline
    4 & 5 & 6 & 4 & 5 & 6 & 4 & 5 & 6\\
    \hline
    7 & 8 & 9 & 8 & 9 & 7 & 9 & 7 & 8\\
    \hline
    \end{tabular}}}
\subfloat{\includegraphics[width=0.4\textwidth]{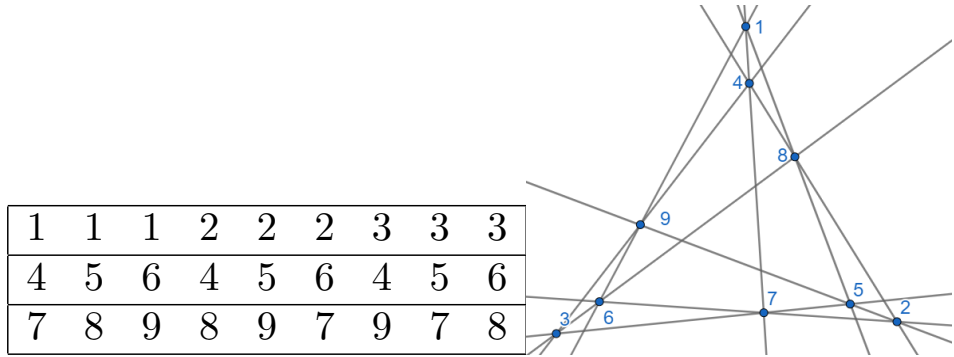}}
\caption{On the left, a $(9_3)$ combinatorial configuration.  On the right, its geometric realization.}
\end{figure}

\begin{defn}
$C'$ and $C''$ are considered \textbf{isomorphic} (sometimes also called \textbf{combinatorially equivalent}, or of the same \textbf{combinatorial type}) if the points and lines admit labels such that a one-to-one correspondence $\tau$ of points to points (and lines to lines) preserves incidences.  
\end{defn}

\begin{figure}[H]
\includegraphics[width=0.8\textwidth]{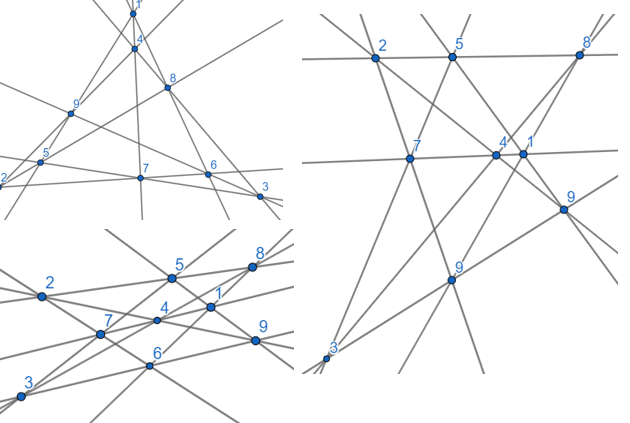}
{\caption{3 isomorphic configurations.}}
\end{figure}
\newpage
\section{Symmetries of Configurations}\hfill\\
\begin{figure}[H]
\subfloat{\includegraphics[width=0.4\textwidth]{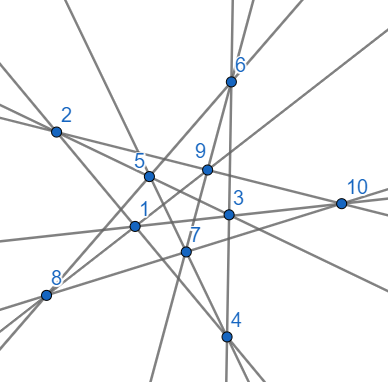}}
\subfloat{\includegraphics[width=0.4\textwidth]{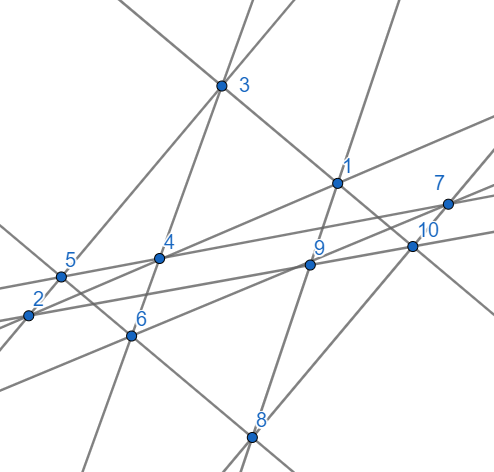}}\\

\subfloat{\resizebox{.5\linewidth}{!}{
    \begin{tabular}{|c c c c c c c c c c |}
    \hline
    10 & 1 & 2 & 3 & 4 & 5 & 6 & 7 & 8 & 9\\ 
    \hline
    1 & 2 & 3 & 4 & 5 & 6 & 7 & 8 & 9 & 10\\
    \hline
    3 & 4 & 5 & 6 & 7 & 8 & 9 & 10 & 1 & 2\\
    \hline
    \end{tabular}}}
\caption{A $(10_3)$ configuration with two isomorphic realizations.}
\label{fig:iso1}
\end{figure}

We begin with an example of a ($10_3$) configuration with two of its isomorphic realizations and their underlying set-configuration as shown in Figure \ref{fig:iso1}.  Notice that these realizations are symmetric, the first having 5-fold rotations symmetry, and the other having reflectional symmetry. One might wonder if the set-configuration exhibits properties that give rise to realizations with symmetries. This is exactly the motivation of the following section; we wish to be able to classify the symmetric structure of the set-configuration and determine when realizations exist with the same symmetries. From here on, we reserve the word \emph{symmetry} solely for geometric configurations, and refer to ``symmetries" of a set-configuration as \emph{automorphisms}. 

\subsection{Symmetries and Automorphisms}\hfill\\

First, we introduce some necessary definitions and theorems.
\begin{defn}
A \textbf{symmetry} of geometric configurations in the Euclidean plane is an isometry of the plane that map the configuration onto itself \cite{Grunbaum}.
\end{defn}


\begin{thm}[\cite{Grunbaum}]
The set of all symmetries of a geometric configuration form a group. If $G$ is an arbitrary geometric configuration we denote its symmetric group as $\text{Sym}(G)$ .
\end{thm}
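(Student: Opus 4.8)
The plan is to verify the group axioms directly for the set $\mathrm{Sym}(G)$ of all isometries of the Euclidean plane that carry the geometric configuration $G$ onto itself, using the fact that the full group of isometries of the plane, call it $\mathrm{Isom}(\mathbb{E}^2)$, is already a group under composition. The strategy is thus to show that $\mathrm{Sym}(G)$ is a subgroup of $\mathrm{Isom}(\mathbb{E}^2)$, which by the subgroup criterion amounts to checking that it is nonempty and closed under composition and inverses. This reduces the whole statement to three short verifications.

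First I would observe that the identity isometry fixes every point and line, hence certainly maps $G$ onto itself, so $\mathrm{id} \in \mathrm{Sym}(G)$ and the set is nonempty (and this element will serve as the group identity). Next, for closure, suppose $\sigma, \tau \in \mathrm{Sym}(G)$; since each of $\sigma$ and $\tau$ maps the set of points of $G$ bijectively onto itself and the set of lines of $G$ bijectively onto itself, and each is an isometry (so composites of isometries are isometries), the composite $\sigma \circ \tau$ is an isometry that maps $G$ onto itself, hence lies in $\mathrm{Sym}(G)$. For inverses, if $\sigma \in \mathrm{Sym}(G)$ then $\sigma$ is a bijection of the plane whose inverse $\sigma^{-1}$ is again an isometry; since $\sigma$ permutes the points of $G$ among themselves and the lines among themselves, so does $\sigma^{-1}$, and therefore $\sigma^{-1}$ maps $G$ onto itself and lies in $\mathrm{Sym}(G)$. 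Associativity is inherited from composition of functions.

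The only point requiring any care — and hence the main obstacle, though it is a mild one — is making precise what ``maps the configuration onto itself'' means and checking that this condition is genuinely preserved under composition and inversion: one must note that a symmetry is required to send points to points \emph{and} lines to lines (not merely to preserve the point set or merely to preserve incidences abstractly), and that because an isometry sends collinear points to collinear points, preserving the point set together with preserving incidences forces the line set to be preserved as well. Once this is spelled out, the closure and inverse conditions follow immediately, and since $\mathrm{Isom}(\mathbb{E}^2)$ is a known group, $\mathrm{Sym}(G)$ inherits the group structure. I would then simply record the notation $\mathrm{Sym}(G)$ for this group, as in the statement.
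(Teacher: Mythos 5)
Your proof is correct. The paper itself offers no proof of this theorem --- it is stated with a citation to Gr\"unbaum --- so there is nothing to match against directly; your argument (exhibit $\mathrm{Sym}(G)$ as a subgroup of $\mathrm{Isom}(\mathbb{E}^2)$ by checking nonemptiness via the identity, closure under composition, and closure under inverses) is the standard one and fills the gap cleanly. It is worth noting that your route differs slightly from the paper's proof of the analogous statement for automorphism groups (Theorem 3.1.4): there the authors obtain inverses indirectly, by observing that a permutation has finite order $o$ and writing $\sigma^{-1}=\sigma^{o-1}$ as a composite of incidence-preserving maps. Your direct argument --- that the inverse of an isometry is an isometry and that a bijection of the point set (resp.\ line set) onto itself inverts to another such bijection --- is cleaner and does not rely on finiteness of order, which for isometries of the plane is not available a priori anyway. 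You are also right to flag the one point of care: ``maps the configuration onto itself'' must be read as permuting the configuration's points among themselves \emph{and} its lines among themselves, not merely preserving the point set, since extra collinearities among configuration points need not correspond to configuration lines; under that reading your closure and inverse checks go through without issue.
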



\begin{defn}
An \textbf{automorphism} is a 1-to-1 incidence preserving transformation from points to points (and lines to lines) of a set-configuration.
\end{defn}

\begin{thm}\label{thm:auto}
The set of all automorphisms of a given set-configuration form a group subgroup of $S_n \times S_n$. If $C$ is an arbitrary set-configuration we denote its automorphism group as $\text{Aut}(C)$.
\end{thm}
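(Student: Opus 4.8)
The plan is to show two things: that the set of automorphisms of a set-configuration $C$ is closed under composition and inverses (hence a group), and that it naturally embeds into $S_n \times S_n$ via its action on points and on lines. First I would fix notation: let $C$ be an $(n_3)$ set-configuration with point set $P$ and line set $L$, each of size $n$, and recall that an automorphism is a pair $\sigma = (\sigma_P, \sigma_L)$ where $\sigma_P \colon P \to P$ and $\sigma_L \colon L \to L$ are bijections satisfying the incidence-preservation condition: a point $x$ lies on a line $\ell$ if and only if $\sigma_P(x)$ lies on $\sigma_L(\ell)$.

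Next I would verify the group axioms directly. The identity pair $(\mathrm{id}_P, \mathrm{id}_L)$ is trivially an automorphism. For closure, given automorphisms $\sigma = (\sigma_P,\sigma_L)$ and $\tau = (\tau_P,\tau_L)$, I would check that $\tau \circ \sigma := (\tau_P \circ \sigma_P, \tau_L \circ \sigma_L)$ is again incidence-preserving: $x \in \ell \iff \sigma_P(x) \in \sigma_L(\ell) \iff \tau_P(\sigma_P(x)) \in \tau_L(\sigma_L(\ell))$, applying the defining property of $\sigma$ then of $\tau$. For inverses, since $\sigma_P$ and $\sigma_L$ are bijections they have set-theoretic inverses, and the biconditional form of the incidence condition means $(\sigma_P^{-1}, \sigma_L^{-1})$ also preserves incidence — this is exactly the reason the definition is stated as an ``if and only if'' rather than a one-directional implication. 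Associativity is inherited from composition of functions coordinatewise. This establishes that $\mathrm{Aut}(C)$ is a group.

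For the embedding into $S_n \times S_n$, I would identify $P$ and $L$ each with $\{1,\dots,n\}$ by choosing labellings, so that $\sigma_P \in S_n$ and $\sigma_L \in S_n$, and define the map $\Phi \colon \mathrm{Aut}(C) \to S_n \times S_n$ by $\Phi(\sigma) = (\sigma_P, \sigma_L)$. The computation in the previous paragraph shows $\Phi$ is a homomorphism, and it is injective essentially by definition since an automorphism \emph{is} the pair $(\sigma_P,\sigma_L)$. Hence $\mathrm{Aut}(C)$ is isomorphic to its image, a subgroup of $S_n \times S_n$, which is the claim. I would also remark that the image is generally a proper subgroup, cut out by the incidence constraints, and one could note (though it is not needed for the statement) that in an $(n_3)$ configuration $\sigma_L$ is in fact determined by $\sigma_P$, since a line is recoverable from the set of points on it, so the image lies in the graph of a partial function $S_n \dashrightarrow S_n$.

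There is no serious obstacle here — the statement is essentially a formal unwinding of definitions. The only point requiring a moment of care is the closure under inverses, where one must use that the incidence condition is a two-sided biconditional; a reader might otherwise worry that $\sigma_P^{-1}$ could send a non-incident pair to an incident one. Beyond that, the proof is a routine check of the four group axioms together with the trivial observation that a pair of permutations is by definition an element of $S_n \times S_n$.
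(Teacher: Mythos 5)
Your proof is correct, and it takes a slightly different route from the paper's at the one step that requires any care: closure under inverses. You argue directly that $(\sigma_P^{-1},\sigma_L^{-1})$ preserves incidence because the defining condition is a biconditional, whereas the paper sidesteps the biconditional entirely by using finiteness: since $(\sigma_l,\sigma_p)$ has finite order $o_l o_p$, its inverse equals $(\sigma_l,\sigma_p)^{o_l o_p - 1}$, a positive power, and is therefore a composition of incidence-preserving maps. The paper's trick is the standard ``a nonempty subset of a finite group closed under the operation is a subgroup'' argument; it buys you the inverse axiom for free once you have closure, but it leans on closure under composition without stating it as a separate step, which you do verify explicitly. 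Your version is more self-contained and also generalizes to infinite incidence structures, where the finite-order argument would fail; the paper's is marginally shorter. Your closing remark that $\sigma_L$ is determined by $\sigma_P$ (since a line is recoverable from its point set) is a nice observation the paper does not make, though, as you say, it is not needed for the statement.
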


\begin{proof}

Let $H$ be the set of all automorphisms of an $(n_k)$-configuration. Any element in $H$ must be a permutation of the $n$ lines paired within a permutation of $n$ points and thus is contained within $S_n \times S_n$. 

Consider the arbitrary automorphism $(\sigma_l, \sigma_p) \in H$ where each $\sigma$ is an incidence preserving permutation of the lines or points. We know permutations are cyclic, so let $o_l, o_p$ be the order of $\sigma_l, \sigma_p$, respectively. We have,
\[(\sigma_l, \sigma_p)^{o_lo_p} = e \implies (\sigma_l, \sigma_p)^{o_lo_p - 1} = (\sigma_l, \sigma_p)^{-1}\]

Since $(\sigma_l, \sigma_p)^{-1}$ is just the composition of $o_lo_p - 1$ many incident preserving mapping, it must also preserve incidences. Thus $(\sigma_l, \sigma_p)^{-1} \in H$.
The identity transformation preserves incidences and thus $H$ is a subgroup of $S_n \times S_n$.
\end{proof}

An immediate consequence of Theorem \ref{thm:auto} is the following:
\begin{cor}
For any geometric realization $G$ of a set-configuration $C$, $\text{Sym}(G) \leq \text{Aut}(C)$. \qedsymbol
\end{cor}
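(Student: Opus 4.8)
The plan is to produce the inclusion by building a natural injective group homomorphism $\Phi\colon \text{Sym}(G)\to \text{Aut}(C)$; its image is then a subgroup of $\text{Aut}(C)$ isomorphic to $\text{Sym}(G)$, and under the usual identification of a symmetry with the pair of permutations it induces on the labelled points and lines this is exactly the assertion $\text{Sym}(G)\leq\text{Aut}(C)$. Throughout I would use the single geometric fact that an isometry of the (extended) Euclidean plane carries straight lines bijectively to straight lines and preserves the point-on-line relation, together with the already-established Theorem \ref{thm:auto} that $\text{Aut}(C)$ is a group inside $S_n\times S_n$.

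First I would fix $g\in\text{Sym}(G)$, i.e. an isometry mapping the point set and line set of $G$ onto themselves. Since $g$ permutes the $n$ points of $G$ and the $n$ lines of $G$, labelling those points and lines by the marks and blocks of $C$ yields a pair $(\sigma_l,\sigma_p)\in S_n\times S_n$. I would then check that this pair preserves incidences: if a point $P$ of $G$ lies on a line $\ell$ of $G$, then $g(P)$ lies on $g(\ell)$ because $g$ preserves collinearity, and applying the same argument to the isometry $g^{-1}$ gives the converse, so $P\in\ell \iff \sigma_p(P)\in\sigma_l(\ell)$. Hence $\Phi(g):=(\sigma_l,\sigma_p)\in\text{Aut}(C)$. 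That $\Phi$ is a homomorphism is immediate, since composing isometries composes the induced permutations componentwise, matching the product in $S_n\times S_n$; and the identity isometry induces the identity automorphism.

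The remaining step is injectivity of $\Phi$: if $\Phi(g)$ is the identity then $g$ fixes every point of $G$, and since the points of a genuine $(n_3)$ configuration are not all collinear, they affinely span the plane, so an isometry fixing them is the identity and $g=e$. I expect this to be the only delicate point — everything else follows directly from the fact that plane isometries preserve collinearity — so in a full write-up I would either invoke the non-collinearity of the points of a realization explicitly or, more cheaply, simply note that the corollary is usually read modulo the identification of each symmetry with its action on the configuration's points and lines, in which case $\text{Sym}(G)$ is literally the set $\{\Phi(g):g\in\text{Sym}(G)\}\subseteq\text{Aut}(C)$ and, being the image of a group homomorphism, is a subgroup. $\qed$
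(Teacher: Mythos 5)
Your proof is correct and is exactly the argument the paper has in mind: the corollary is stated there as an immediate consequence of Theorem \ref{thm:auto} with no written proof, and your write-up simply fills in the standard details (isometries preserve collinearity, hence induce incidence-preserving permutation pairs, and the identification of $\text{Sym}(G)$ with its image in $S_n\times S_n$ makes it a subgroup of $\text{Aut}(C)$). The injectivity step via non-collinearity of the points is a nice touch the paper leaves implicit, but it is not a departure from the intended approach.
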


Now that we understand the distinction in symmetric structures of set and geometric configurations, we can phrase the main question of this section as follows.

\begin{quest}
What is the relationship between the automorphism group of a configuration and the symmetries of its possible realizations? Furthermore, ff the automorphism group of configurations acts transitively, do there necessarily exist geometric realizations with non-trivial symmetry? \cite{Grunbaum}
\end{quest}

It is known that any symmetry group of a geometric configuration is either cyclic or dihedral in nature \cite{Grunbaum}. In this section we explore in particular automorphisms with cyclic subgroups and realizations thereof with rotational symmetry. We use the terminology ``having $t$-fold rotational symmetry" interchangeably with ``having symmetry group $\mathbb{Z}t/\mathbb{Z}$" as they are equivalent. 

To begin to investigate this question we necessarily need to be able to identify the automorphism group of a given set configuration. This turns out to be non-trivial so we enlist some tools from graph theory to assist. 
\subsection{The Levi Graph}\hfill\\
\begin{defn}
The \textbf{Levi graph} of a $(n_k)$set-configuration is a bipartite graph of $n$ black and white vertices, corresponding to the points and lines of the configuration, respectively. An edge exists between different colored vertices if and only if the corresponding points and lines are incident to each other. 
\end{defn}

\begin{thm}[\cite{Grunbaum}]
Every set-configuration admits a unique Levi graph. Additionally, there is a 1-to-1 relationship between automorphism of a set-configuration and \emph{graph} automorphism of its Levi graph.
\end{thm}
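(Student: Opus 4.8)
The plan is to treat the two assertions separately. Uniqueness is essentially a matter of well-definedness: the Levi graph is produced by a deterministic recipe from the data of the set-configuration --- its vertex set is the disjoint union of the point-set (coloured black) and the line-set (coloured white), and its edge set is exactly the incidence relation --- so any two graphs built this way from the same configuration are literally the same graph, with no arbitrary choices involved. I would spell out only that point, and record in passing the standard structural consequences (the result is a $k$-regular bipartite graph on $2n$ vertices of girth at least $6$, a $4$-cycle being impossible since it would force two lines to share two points), since the girth remark is occasionally convenient later.

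For the bijection I would exhibit mutually inverse maps between $\mathrm{Aut}(C)$ and the group of colour-preserving automorphisms of the Levi graph $L$. Given $(\sigma_p,\sigma_l)\in\mathrm{Aut}(C)$, define $\Phi(\sigma_p,\sigma_l)\colon V(L)\to V(L)$ to act as $\sigma_p$ on black vertices and as $\sigma_l$ on white vertices. An edge of $L$ is a pair $\{p,\ell\}$ with $p$ incident to $\ell$; its image $\{\sigma_p(p),\sigma_l(\ell)\}$ is an edge if and only if $\sigma_p(p)$ is incident to $\sigma_l(\ell)$, which holds because $(\sigma_p,\sigma_l)$ preserves incidence in both directions (equivalently, because $(\sigma_p,\sigma_l)^{-1}\in\mathrm{Aut}(C)$ by Theorem \ref{thm:auto}). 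Hence $\Phi(\sigma_p,\sigma_l)$ is a colour-preserving automorphism of $L$. Conversely, any colour-preserving $\phi\in\mathrm{Aut}(L)$ restricts to a permutation $\sigma_p$ of the black vertices and $\sigma_l$ of the white vertices, and adjacency-preservation says precisely that $p$ is incident to $\ell$ iff $\sigma_p(p)$ is incident to $\sigma_l(\ell)$, so $\Psi(\phi):=(\sigma_p,\sigma_l)\in\mathrm{Aut}(C)$. It is immediate from the definitions that $\Phi$ and $\Psi$ are inverse to one another, and that both respect composition, so this is in fact a group isomorphism and not merely a bijection.

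The one point that needs care --- and the main obstacle to a naive reading of the statement --- is the colouring. A bipartite-graph automorphism is a priori only required to preserve adjacency, and for many configurations (for instance the Pappus $(9_3)$ configuration, whose Levi graph is vertex-transitive) there exist automorphisms of $L$ that interchange black and white vertices; these do not arise from elements of $\mathrm{Aut}(C)$ but instead encode self-dualities (correlations) of the configuration. I would therefore fix at the outset the convention that ``graph automorphism of the Levi graph'' means \emph{colour-preserving} automorphism, so that $\Phi$ lands in the right group and $\Psi$ is defined there; with that convention the two constructions are visibly mutually inverse and the theorem follows, and one may append a remark that dropping the colour constraint enlarges the automorphism group by exactly the self-dualities, when any exist.
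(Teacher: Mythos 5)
Your argument is correct. Note that the paper itself offers no proof of this statement --- it is cited to Gr\"unbaum and stated without argument --- so there is no in-paper proof to compare against; your write-up fills that gap with the standard argument. The uniqueness half is, as you say, pure well-definedness, and the mutually inverse maps $\Phi$ and $\Psi$ between $\mathrm{Aut}(C)$ and the colour-preserving automorphisms of the Levi graph are exactly the right construction; observing that the correspondence respects composition upgrades the claimed bijection to a group isomorphism, which is the form actually used later in the paper. The caveat you isolate --- that an unrestricted automorphism of the bipartite Levi graph may swap the two colour classes and then encodes a self-duality rather than an automorphism of $C$ --- is precisely the ``bit of trickiness'' the authors flag in the paragraph following the theorem, so your convention of reading ``graph automorphism'' as ``colour-preserving automorphism'' is the intended one and is necessary for the statement to be literally true. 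The only inessential addition is the girth-$\geq 6$ remark, which tacitly uses that two distinct lines of a configuration share at most one point; that is standard for configurations but is not explicit in the paper's Definition~2.1, so you would want to either justify it or drop it.
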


The Levi graph proves to be an effective tool as its group of graph automorphisms is isomorphic to the automorphism group of the set configuration it represents. Quite a lot is known about graph automorphism and there even exists computational methods for determining an graph automorphism group from an arbitrary graph. A bit of trickiness arises however when we notice that Levi graphs are bipartite and we only concern ourselves with graph automorphism that map same color to same color.

\begin{figure}[H]
\subfloat{\includegraphics[width=0.4\textwidth]{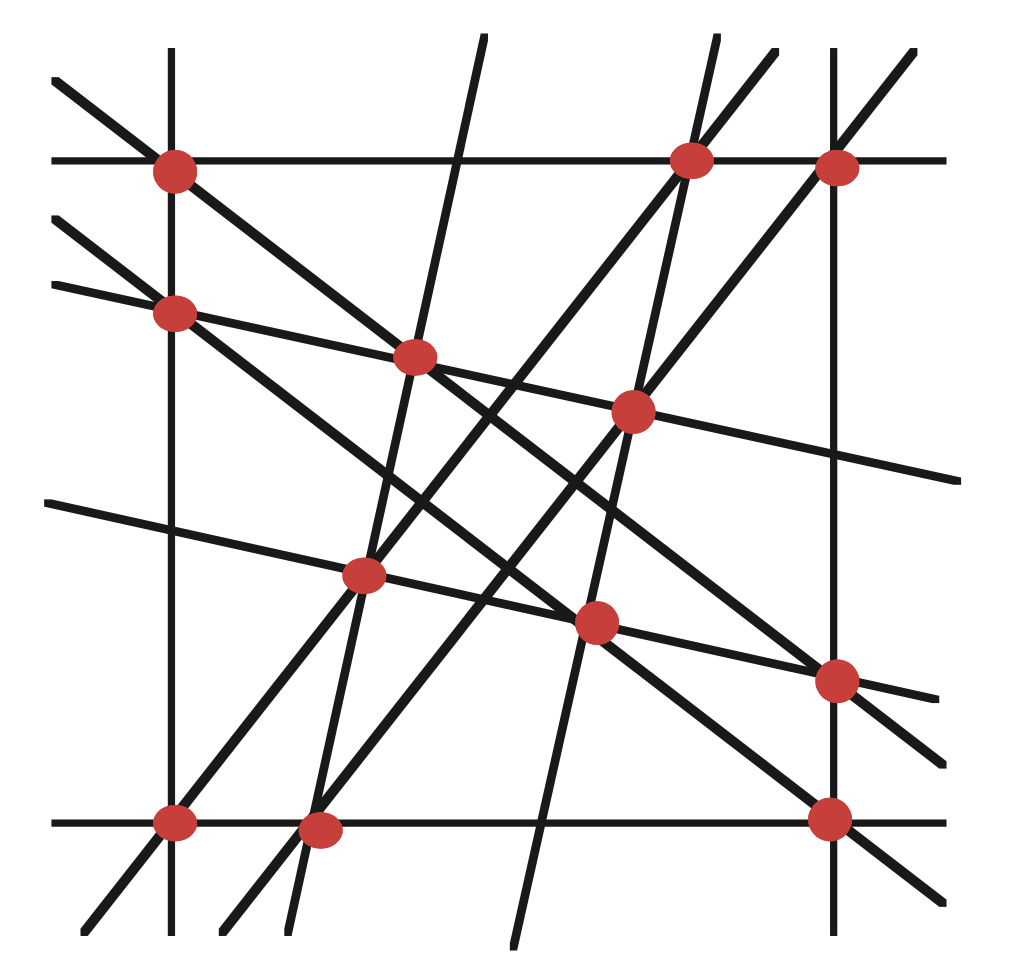}}
\subfloat{\includegraphics[width=0.4\textwidth]{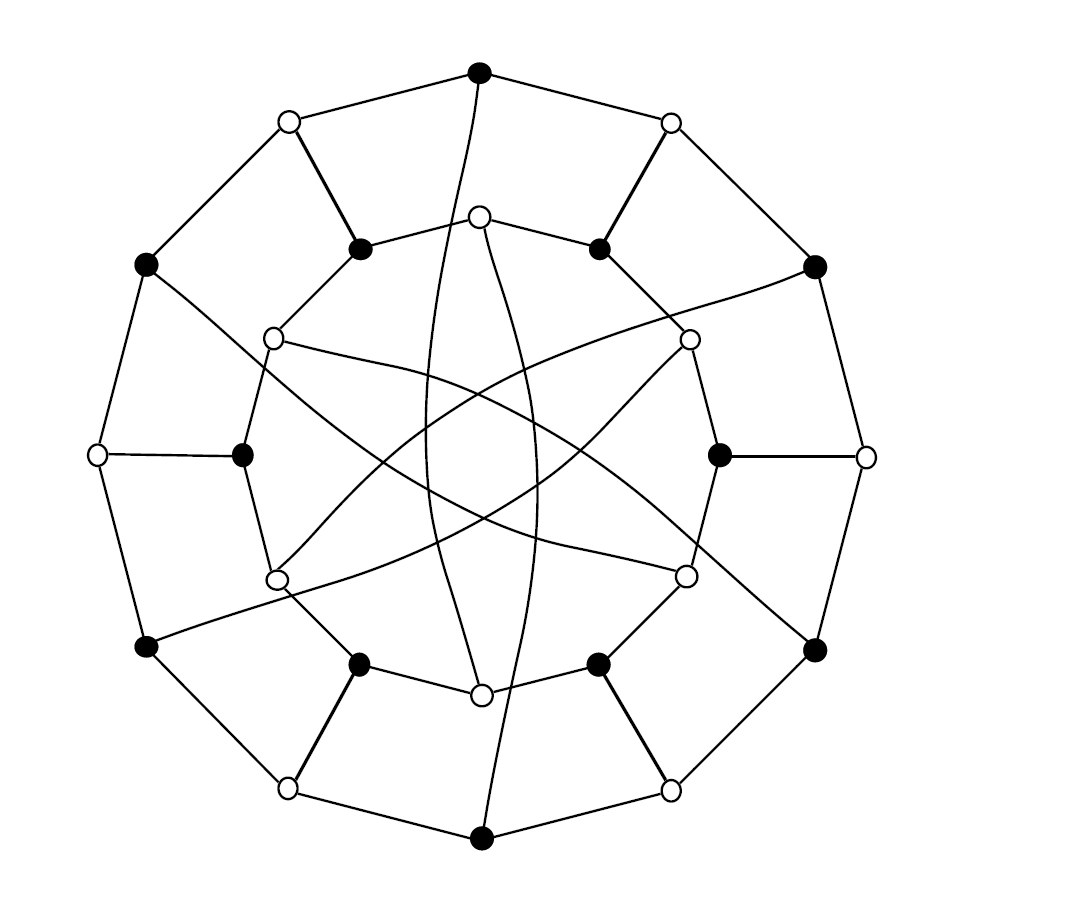}}
{\caption{A $(12_3)$ configuration and its Levi Graph.}\cite{Grunbaum}}
\end{figure}

\section{Examples of Automorphisms and Symmetries}\hfill\\
Recall that our interest is in the relationship between the automorphism group of a set-configuration and the symmetries of its possible realizations. In this section we choose several examples of set-configurations with known geometric realizations and determine their automorphism group. 
\subsection{The $(9_3)_2$ Configuration}\hfill\\

We begin with the Levi graph and set-configuration for the second of three non-isomorphic $(9_3)$ configurations.
\begin{figure}[H]
\subfloat{\includegraphics[width=0.5\textwidth]{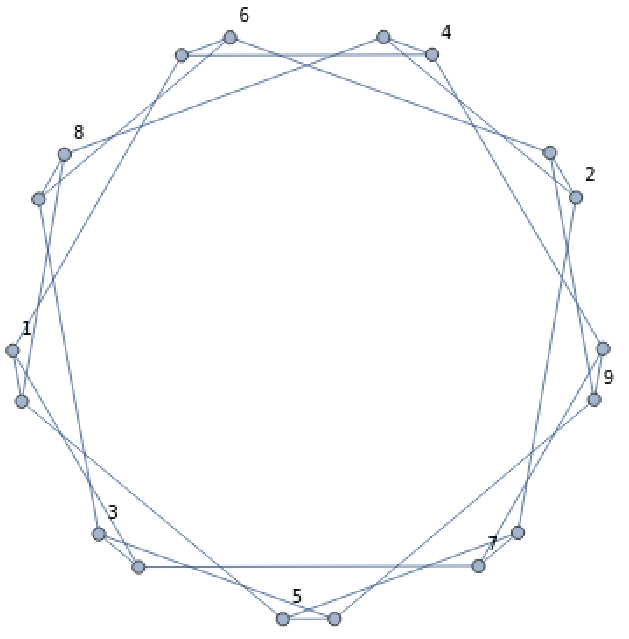}}
\subfloat{\resizebox{.5\linewidth}{!}{
    \begin{tabular}{|c c c c c c c c c |}
    \hline
    1 & 1 & 1 & 2 & 2 & 2 & 3 & 3 & 4\\ 
    \hline
    3 & 4 & 5 & 4 & 5 & 6 & 5 & 6 & 7\\
    \hline
    7 & 6 & 8 & 8 & 7 & 9 & 9 & 8 & 9\\
    \hline
    \end{tabular}}}
\caption{The $(9_3)_2$ set-configuration and its Levi graph (lines unlabeled).}
\label{fig:932}
\end{figure}

Using Figure \ref{fig:932}, we demonstrate how we can detect a few subgroups of $\text{Aut}((9_3)_2)$ by inspection.

\begin{thm}
$\mathbb{Z}/9\mathbb{Z} \leq \text{Aut}((9_3)_2))$.
\end{thm}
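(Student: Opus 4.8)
The plan is to exhibit an explicit automorphism of order $9$ of the $(9_3)_2$ configuration and argue that the cyclic group it generates is a subgroup of $\mathrm{Aut}((9_3)_2)$. Since by the earlier theorem automorphisms of the set-configuration correspond bijectively to colour-preserving graph automorphisms of the Levi graph, it suffices to produce a single permutation $\sigma_p$ of the nine points which, together with some induced permutation $\sigma_l$ of the nine lines (blocks), carries the block system in Figure \ref{fig:932} onto itself, and to check that $\sigma_p$ has order $9$. The pair $(\sigma_l,\sigma_p)$ then generates a copy of $\mathbb{Z}/9\mathbb{Z}$ inside $S_9\times S_9$, which lies in $\mathrm{Aut}((9_3)_2)$ by Theorem \ref{thm:auto}.

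First I would read off the nine blocks from the table, namely $\{1,3,7\},\{1,4,6\},\{1,5,8\},\{2,4,8\},\{2,5,7\},\{2,6,9\},\{3,5,9\},\{3,6,8\},\{4,7,9\}$, and look for a relabelling of $\{1,\dots,9\}$ that permutes this collection of triples. The natural guess, given that we are trying to prove cyclicity of order $9$, is to hunt for a $9$-cycle $\sigma_p=(1\,a_2\,a_3\,\cdots\,a_9)$ on the points; one can either search by hand using the incidence structure (e.g. track where the three blocks through point $1$ must go, which pins down several images) or, as the text suggests is legitimate, invoke a computational automorphism-group routine on the Levi graph. Once a candidate $9$-cycle is found, I would define $\sigma_l$ by sending each block to the block obtained by applying $\sigma_p$ entrywise, and then verify directly that every image triple is again one of the nine blocks listed — a finite check of nine triples.

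Having confirmed that $(\sigma_l,\sigma_p)$ is an automorphism, I would note that its order equals the order of $\sigma_p$ (the two coordinates act on disjoint sets), which is $9$ since $\sigma_p$ is a $9$-cycle. Hence $\langle(\sigma_l,\sigma_p)\rangle\cong\mathbb{Z}/9\mathbb{Z}$, and this cyclic subgroup sits inside $\mathrm{Aut}((9_3)_2)$, proving the theorem. The only real obstacle is the first step: finding the correct $9$-cycle. This is where the combinatorial structure of the $(9_3)_2$ configuration matters, since not every $(9_3)$ configuration admits an order-$9$ automorphism, so the argument is genuinely using the specific incidence table rather than a generic counting argument. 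Everything after that — the entrywise definition of $\sigma_l$, the invariance check, and the order computation — is routine verification.
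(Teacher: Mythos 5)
Your strategy is the same as the paper's --- exhibit a point permutation of order $9$ that permutes the nine blocks, let it induce the line permutation, and invoke Theorem \ref{thm:auto} --- and your reading of the nine blocks from the incidence table is correct. But the proposal stops exactly where the proof has to begin: you never actually produce the $9$-cycle, deferring it to a hand search or a computer search. For this theorem the explicit permutation \emph{is} the entire mathematical content; everything you describe after that point (defining $\sigma_l$ entrywise, checking nine triples, computing the order) is, as you say yourself, routine. As written, your argument only establishes that \emph{if} such a $9$-cycle exists then $\mathbb{Z}/9\mathbb{Z}\leq\mathrm{Aut}((9_3)_2)$, which is not the statement being proved.

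For completeness: the permutation the paper uses is $i\mapsto i+2\pmod 9$, which is a $9$-cycle since $\gcd(2,9)=1$. One checks directly that it carries $\{1,3,7\}$ to $\{3,5,9\}$, $\{1,4,6\}$ to $\{3,6,8\}$, $\{1,5,8\}$ to $\{1,3,7\}$, and so on through all nine blocks, each image landing in your list; with that single line supplied, the rest of your write-up goes through. One small correction: the order of the pair $(\sigma_l,\sigma_p)$ equals the order of $\sigma_p$ not because ``the two coordinates act on disjoint sets'' (in general the order of a pair in $S_9\times S_9$ is the least common multiple of the two orders), but because $\sigma_l$ is induced by $\sigma_p$, so $\sigma_l^9$ is induced by $\sigma_p^9=\mathrm{id}$ and is therefore itself the identity.
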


\begin{proof}
The permutation that maps point $i$ to points $i + 2 (\text{mod} 9)$ is an automorphism. Furthermore this automorphism is cyclic and thus generates a subgroup of group $\text{Aut}((9_3)_2))$ isomorphic to $\mathbb{Z}/9\mathbb{Z}$. 
\end{proof}

In this case, $\mathbb{Z}/9\mathbb{Z}$ acts transitively on the set of edges. We might suspect that there exists a geometric realization of $(9_3)_2$ with some form of rotational symmetry. One such realization is shown in Figure \ref{fig:932}

\begin{figure}[H]
\includegraphics[width=0.5\linewidth]{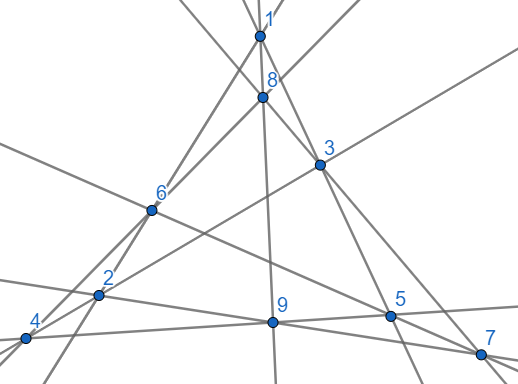}
\caption{A realization of the $(9_3)_2$ configuration.}
\label{fig:932}
\end{figure}

Before moving on, we note that, as it turns out, any ($n_3$) configuration $C$ with $\mathbb{Z}/n\mathbb{Z} \leq \text{Aut}(C)$ is a special case of configurations called cyclic configurations which we discuss in the latter half of this paper. Additionally, it can be shown that, when $n$ is even, the geometric realizability of such configurations as symmetric in the euclidean plane is well studied.

\subsection{Pappus}\hfill\\

One of the earliest discovered geometrically realizable 3-configurations is the Pappus configuration (commonly denoted as $(9_3)_ 1$). Here, we investigate the structure of its automorphism group and its implications on certain geometric realizations.

\begin{figure}[H]
\includegraphics[width=0.9\linewidth]{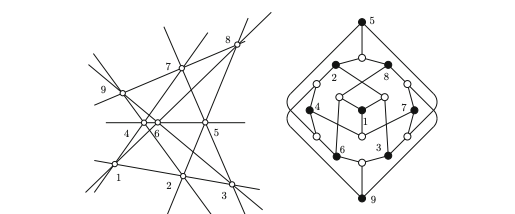}
\caption{The Pappus configuration $(9_3)_1$.  On the left is the geometric configuration, and on the right is its corresponding Levi graph.\cite{graphs}}
\end{figure}

\begin{thm}
$(\mathbb{Z}/3\mathbb{Z})^2 \leq \text{Aut}((9_3)_1)$.
\end{thm}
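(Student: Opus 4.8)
The plan is to exhibit two commuting automorphisms of the Pappus configuration, each of order $3$, whose generated subgroup has order $9$ and hence is isomorphic to $(\mathbb{Z}/3\mathbb{Z})^2$. I would work from the explicit combinatorial description given in the first figure of the excerpt, where the nine lines are the triples $\{i,j,k\}$ with $i\in\{1,2,3\}$, $j\in\{4,5,6\}$, $k\in\{7,8,9\}$, with the $k$-coordinate running through a Latin-square pattern as the $(i,j)$ pair varies. The first automorphism $\alpha$ would cyclically permute the three ``columns'' of the point set, e.g.\ $\alpha=(1\,2\,3)(4\,5\,6)(7\,9\,8)$ (the last cycle chosen to respect the Latin-square twist in the table), and the second automorphism $\beta$ would cyclically permute within the blocks in a transversal direction, e.g.\ $\beta=(1\,2\,3)(4\,5\,6)(7\,8\,9)$ or some appropriately chosen variant.

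The key steps, in order, would be: first, fix the combinatorial configuration precisely (the $9$-block table from Figure~1) and identify which permutations of $\{1,\dots,9\}$ send each block to a block, i.e.\ preserve the incidence structure; second, check directly that the candidate $\alpha$ and $\beta$ each permute the set of nine blocks among themselves, so each lies in $\text{Aut}((9_3)_1)$; third, verify $\alpha^3=\beta^3=e$ and $\alpha\beta=\beta\alpha$ by composing the permutations; fourth, confirm $\langle\alpha,\beta\rangle$ has order $9$ rather than $3$, which amounts to checking $\beta\notin\langle\alpha\rangle$ (a quick inspection, since $\langle\alpha\rangle$ has only three elements). A finite group generated by two commuting elements of order $3$, one not a power of the other, is $(\mathbb{Z}/3\mathbb{Z})^2$, giving the containment. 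Alternatively, and perhaps more cleanly, one can describe the Pappus configuration over $\mathbb{Z}/3\mathbb{Z}$: take points to be the $9$ elements of the affine plane $\mathbb{A}^2(\mathbb{F}_3)$ and lines to be a suitable parallel class structure (three parallel classes of three lines each), so that the translation subgroup $(\mathbb{Z}/3\mathbb{Z})^2$ of $\text{AGL}_2(\mathbb{F}_3)$ acts as automorphisms; one then just checks this incidence structure is isomorphic to the one in Figure~1.

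The main obstacle I anticipate is purely bookkeeping: getting the two generators to genuinely \emph{commute} while both preserving incidences, because the Latin-square twist in the block table (the rows $7\,8\,9\,/\,8\,9\,7\,/\,9\,7\,8$) means the ``obvious'' column-shift and block-internal shift interact nontrivially, and a careless choice of $\alpha,\beta$ will either fail to preserve the blocks or will fail to commute (producing an $S_3$ or larger group instead). So the delicate part is choosing the cycle structure on $\{7,8,9\}$ correctly in each generator. The affine-plane reformulation sidesteps this entirely, at the cost of needing to verify the isomorphism with the given table; I would likely present the explicit-permutation argument as the main line and mention the affine-plane viewpoint as conceptual justification for why exactly $(\mathbb{Z}/3\mathbb{Z})^2$ shows up.
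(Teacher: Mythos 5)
Your approach is essentially the paper's: it too exhibits two explicit commuting order-$3$ permutations of the points and notes that they generate a subgroup of order $9$ isomorphic to $(\mathbb{Z}/3\mathbb{Z})^2$ (the paper then additionally observes, via orbit--stabilizer, that this subgroup acts transitively). Your worry about the cycle structure on $\{7,8,9\}$ is warranted --- your tentative $\beta=(1\,2\,3)(4\,5\,6)(7\,8\,9)$ sends the block $\{1,4,7\}$ to the non-block $\{2,5,8\}$ in the Figure~1 labelling --- but an appropriately chosen variant such as $\beta=(4\,5\,6)(7\,8\,9)$ fixing $1,2,3$ does preserve all nine blocks and commutes with your $\alpha$, so the argument goes through as you outlined.
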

\begin{proof}
It is easy to see that the permutation of vertices $(123)(456)(789)$ is an automorphism of the Levi graph. Additionally the permutation $(148)(258)(369)$ is an automorphism, and commutes with the first. These two permutations generate a subgroup of $Aut((9_3)_1)$ of order 9 isomorphic to $(\mathbb{Z}/3\mathbb{Z})^2$ \cite{graphs}.
For any given point in the configuration, that point is not fixed by any of these permutations other than the identity. This implies every element has a trivial stabilizer. Since $|(\mathbb{Z}/3\mathbb{Z})^2|$ is equal to the number of vertices we can apply the Orbit-Stabilizer theorem and see that this subgroup acts transitively on the points of the configuration.

\end{proof}

One might expect the existence of a geometric realization of the Pappus configuration with 3-fold rotational symmetry. This is observed in Figure \ref{fig:pappus}.

\begin{figure}[H]
\includegraphics[width=0.6\linewidth]{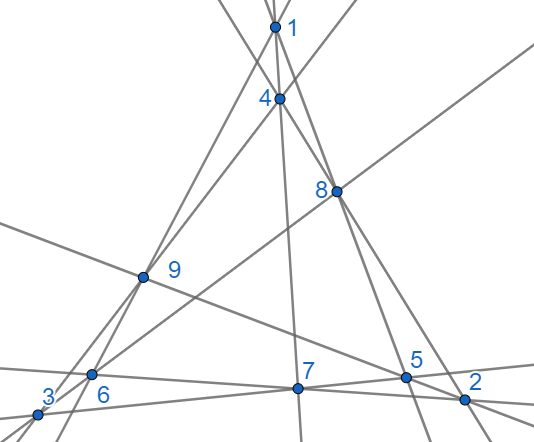}
\caption{A realization of the Pappus Configuration with 3-fold rotational symmetry.}
\label{fig:pappus}
\end{figure}

\subsection{Desargues}\hfill\\

Another early configuration is the Desargues configurations also known as $(10_3)_ 1$ and we endeavor to explore its automorphic structure in the same way. In Figure \ref{fig:des_levi} we see its Levi graph.

\begin{figure}[H]
    \centering
    \includegraphics{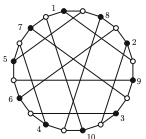}
    \caption{The Levi graph for the $(10_3)_ 1$ configuration \cite{graphs}.}
    \label{fig:des_levi}
\end{figure}

\begin{thm}
$\mathbb{Z}/5\mathbb{Z} \leq \text{Aut}((10_3)_1)$
\end{thm}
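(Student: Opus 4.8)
The plan is to follow the template of the Pappus argument: exhibit one explicit automorphism of order $5$ and then note that, $5$ being prime, any such non-identity element generates a subgroup isomorphic to $\mathbb{Z}/5\mathbb{Z}$. By the correspondence between automorphisms of a set-configuration and colour-preserving automorphisms of its Levi graph, it suffices to produce a single permutation of the ten points, together with the matching permutation of the ten lines, that preserves every incidence; one can then simply read the order off its cycle structure.

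A clean way to locate such a permutation is to pass through the standard combinatorial model of the Desargues configuration, whose Levi graph in Figure~\ref{fig:des_levi} is the bipartite Kneser graph on $\{1,2,3,4,5\}$: the ten points are identified with the ten $2$-subsets, the ten lines with the ten $3$-subsets, and a point lies on a line exactly when the corresponding $2$-subset is contained in the corresponding $3$-subset. Any permutation of $\{1,2,3,4,5\}$ then induces simultaneously a permutation of the $2$-subsets and of the $3$-subsets that respects containment, hence an automorphism of the configuration. Taking the $5$-cycle $(1\,2\,3\,4\,5)$ yields an automorphism $g$ of order exactly $5$: a $5$-cycle fixes no $2$-subset, so $g$ acts as two disjoint $5$-cycles on the points (and likewise on the lines). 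Pulling $g$ back through the identification, one records it as a concrete product of two $5$-cycles on the vertices of the Levi graph, which is how I would present it in the write-up; alternatively, one may read such a $5$-cycle directly off the figure and verify incidence preservation by hand.

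To conclude: with $g \in \text{Aut}((10_3)_1)$, $g \neq e$ and $g^5 = e$, primality of $5$ forces $\langle g\rangle \cong \mathbb{Z}/5\mathbb{Z}$, so $\mathbb{Z}/5\mathbb{Z} \leq \text{Aut}((10_3)_1)$, as claimed.

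The only genuine obstacle is pinning down the permutation and checking that it preserves all incidences: done naively from the picture this is a short but error-prone case analysis, whereas routed through the $2$-subset/$3$-subset model it is automatic, so the real work is just translating between the two labellings. As a closing remark in the style of the earlier examples, note that this $g$ acts on the points with two orbits of size five; that is precisely the structure one would hope to see reflected in a geometric realization of Desargues with $5$-fold rotational symmetry, namely two concentric pentagonal orbits of points.
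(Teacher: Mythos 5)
Your proposal is correct and lands on the same witness as the paper: an automorphism acting as two disjoint $5$-cycles on the points (the paper simply writes down $(1\,2\,3\,4\,5)(6\,7\,8\,9\,0)$ and asserts by inspection of the Levi graph that it preserves incidences, then notes it generates a cyclic subgroup of order $5$). Where you differ is in how the incidence-preservation is certified: the paper leaves it as a visual check against its labelled figure, whereas you route the construction through the model of Desargues with points the $2$-subsets and lines the $3$-subsets of $\{1,\dots,5\}$, incidence being containment, so that any permutation of the ground set automatically induces an automorphism and the $5$-cycle $(1\,2\,3\,4\,5)$ visibly has order $5$ and no fixed $2$- or $3$-subsets. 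This buys you a verification-free argument and, as a bonus, an a priori explanation of the fact the paper only observes afterwards --- that the action has exactly two orbits of size five on points, hence is not transitive --- at the modest cost of justifying the identification of $(10_3)_1$ with the subset model (which is standard but should be cited or checked once). Either way the primality argument at the end is sound, so the claim $\mathbb{Z}/5\mathbb{Z} \leq \text{Aut}((10_3)_1)$ follows.
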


\begin{proof}
We can see that the permutation $(12345)(67890)$ is an automorphism and generates a cyclic subgroup of order 5 isomorphic to $\mathbb{Z}/5\mathbb{Z}$. 
\end{proof}

This action has two orbits and is not transitive. Unlike our previous examples, there is no known rotationally symmetric realization of $(10_3)_ 1$ in the standard Euclidean plane. In Figure \ref{fig:des_levi} we see a geometric realization (with no apparent symmetry).

\begin{figure}[H]
\centering
    \includegraphics[width=0.5\linewidth]{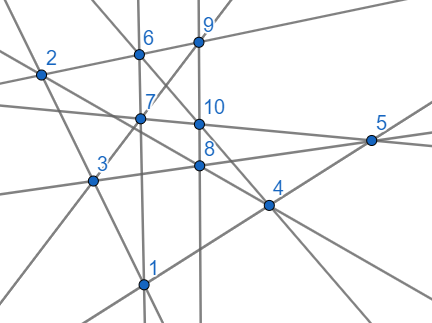}
    \caption{Geometric Realization of the Desargues configuration.}
    \label{fig:des_levi}
\end{figure}

\subsection{The $(10_3)_{10}$ Configuration}\hfill\\

In Figure \ref{fig:iso1} we displayed the set-configurations of $(10_3)_{10}$ and two of its geometric realizations with 5-fold and 2-fold rotational symmetry, respectively. In Figure \ref{fig:103levi}, we give its Levi graph.

\begin{thm}
$\mathbb{Z}/10\mathbb{Z} \leq \text{Aut}((10_3)_{10})$
\end{thm}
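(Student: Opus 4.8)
The plan is to exhibit an explicit order-$10$ automorphism, exactly in the spirit of the arguments already given for $(9_3)_2$, $(9_3)_1$, $(10_3)_1$. Reading the set-configuration off the table in Figure \ref{fig:iso1}, the ten blocks are the triples $\{i-1,\,i,\,i+2\}$ for $i=1,\dots,10$, with point labels taken modulo $10$ (using representatives $1,\dots,10$). This cyclic description immediately hands us the candidate automorphism: let $\sigma$ be the permutation of points $i\mapsto i+1\pmod{10}$, i.e.\ the $10$-cycle $(1\,2\,3\,4\,5\,6\,7\,8\,9\,10)$, paired with the permutation of lines that advances each block index by one.

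First I would check that the pair $(\sigma,\sigma)$ preserves incidences: $\sigma$ sends the block $\{i-1,\,i,\,i+2\}$ to $\{i,\,i+1,\,i+3\}$, which is precisely the block indexed $i+1$. Hence the block set is permuted and, via the Levi-graph correspondence, $(\sigma,\sigma)$ is a genuine automorphism of $(10_3)_{10}$. Next, since $\sigma$ is a $10$-cycle it has order $10$, so the subgroup it generates inside $\text{Aut}((10_3)_{10})$ is cyclic of order $10$, i.e.\ isomorphic to $\mathbb{Z}/10\mathbb{Z}$, which is the claim.

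I do not expect a genuine obstacle here; the only care needed is the label bookkeeping modulo $10$ (in particular that $0$ is recorded as $10$, matching the top-left entry of the table) and the observation that the index shift really permutes the ten blocks rather than mapping them into some larger family — both are immediate from the cyclic form of the blocks. As with the $(9_3)_2$ example, I would close by remarking that this $\mathbb{Z}/10\mathbb{Z}$ acts transitively on points and on lines, which anticipates the rotationally symmetric realization already displayed in Figure \ref{fig:iso1}, and that $(10_3)_{10}$ is thereby an instance of the cyclic configurations studied in the latter half of the paper.
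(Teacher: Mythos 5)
Your proposal is correct and follows essentially the same route as the paper: both exhibit the shift $i \mapsto i+1 \pmod{10}$ as an order-$10$ automorphism generating a transitive cyclic subgroup, with your version simply spelling out the block-preservation check that the paper leaves implicit.
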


\begin{proof}
The permutation that maps vertex $i$ to vertex $i + 1\ (\text{mod } 10)$ is an automorphism when.  This permutation generates a subgroup of order 10 isomorphic to $\mathbb{Z}/10\mathbb{Z}$ that acts transitively on the points (and lines) of the configuration.
\end{proof}

 We know that this group is abelian and thus by the Fundamental Theorem of Finite Abelian Groups it must be isomorphic to $\mathbb{Z}/5\mathbb{Z} \times \mathbb{Z}/2\mathbb{Z}$. This serves as another positive example where a subgroup of an automorphism group is manifested in the symmetries of its realizations.

\begin{figure}[H]
\includegraphics[width=0.5\textwidth]{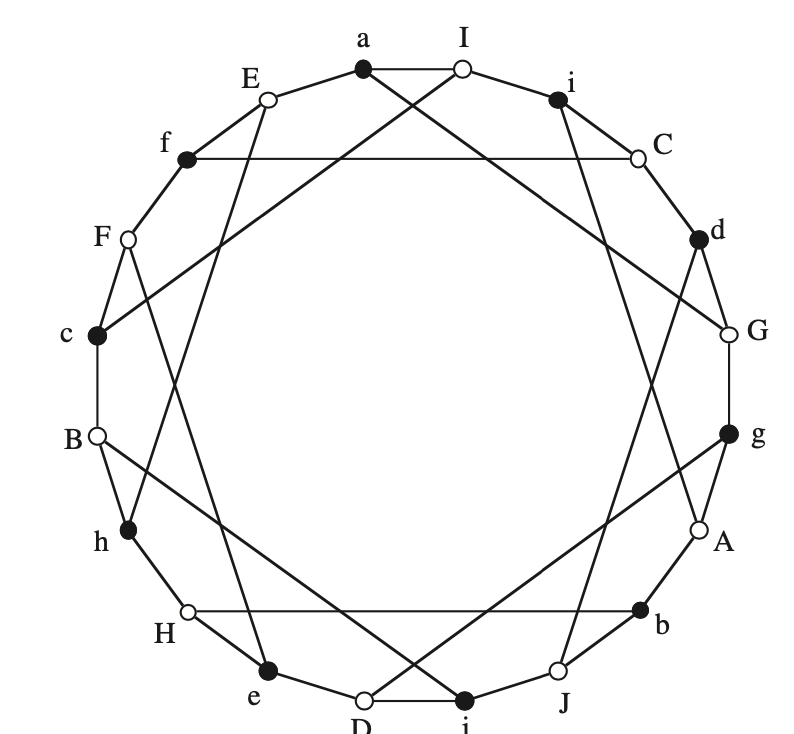}
\caption{Levi graph of the $(10_3)_{10}$ configuration.}\cite{Grunbaum}
\label{fig:103levi}
\end{figure}

\section{Generalized Cyclic Configurations}
\subsection{Proofs of Existences}\label{sec:exist}\hfill\\

While a complete enummeration of the various categories of configurations for $n$ small can be given, it is perhaps more foundational and constructive to highlight the ways to show that these configurations exist.  The theorems below do so, and were known as early as 1882.  

\begin{thm} 
Combinatorial configurations $(n_3)$ exist if and only if $n \geq 7$.
\label{thm:cconf}
\end{thm}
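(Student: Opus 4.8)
The plan is to prove both directions. For the ``only if'' direction, I would first establish a counting/feasibility bound. In an $(n_3)$ configuration, consider a fixed mark $P$. It lies on $3$ blocks, each of which contains $2$ marks other than $P$, and these $2(3) = 6$ marks must all be distinct (two blocks through $P$ share only $P$, since otherwise they would share two marks and hence coincide). Thus there are at least $6$ marks distinct from $P$, giving $n \geq 7$. So no combinatorial $(n_3)$ configuration exists for $n \leq 6$.

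For the ``if'' direction, I would give an explicit construction for every $n \geq 7$. The natural candidate is the cyclic construction hinted at by the title of the section: take marks $\mathbb{Z}/n\mathbb{Z}$ and define the blocks to be the translates $\{i, i+a, i+b\} \pmod n$ for a fixed suitable pair $0 < a < b < n$. One must choose $a, b$ so that (i) each mark lies on exactly $3$ blocks and each block has exactly $3$ marks — automatic as long as the three ``differences'' $a$, $b-a$, $b$ (and their negatives) are all distinct and nonzero mod $n$, which forces a small case analysis — and (ii) no two distinct blocks share two marks, equivalently the multiset of differences $\{\pm a, \pm(b-a), \pm b\}$ contains no repeats, i.e. it is a (partial) perfect difference family condition. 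For $n \geq 7$ one can check that a choice such as $\{0,1,3\}$-translates works when the relevant differences $1, 2, 3$ and $n-1, n-2, n-3$ are six distinct residues, which holds precisely once $n \geq 7$; the small exceptional values of $n$ near the boundary where $\{0,1,3\}$ degenerates (for instance collisions like $3 \equiv n-3$) would be handled by an alternative base block or treated as finitely many cases verified by hand.

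I would organize the write-up as: (1) the lower bound argument above; (2) statement of the cyclic construction with base block $\{0,1,3\}$; (3) verification that it is genuinely an $(n_3)$ configuration for all $n \geq 7$ by checking the two block-intersection and regularity conditions in terms of distinctness of the six residues $\pm 1, \pm 2, \pm 3 \pmod n$; (4) a remark disposing of any borderline $n$ where the generic base block fails, by exhibiting a different base block or a direct list.

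The main obstacle is step (3)–(4): ensuring the cyclic family really has no two blocks meeting in two points for \emph{every} $n \geq 7$, including the handful of small $n$ where the differences $1,2,3$ start to wrap around and collide with their negatives (e.g. $n = 6$ fails because $3 \equiv -3$; one must confirm $n = 7, 8, 9$ are fine and identify exactly where a patch is needed). A secondary subtlety is that the problem only asks for a \emph{combinatorial} configuration, so I must be careful not to accidentally invoke geometric realizability — the construction is purely a statement about incidence between abstract marks and blocks, and the verification is entirely modular arithmetic.
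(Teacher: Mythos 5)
Your construction for the existence direction is exactly the paper's: the translates of the base block $\{0,1,3\}$ are precisely the columns of the generalized cyclic table $\mathscr{C}(n,1,3)$, and the verification reduces, as you say, to the six differences $\pm 1, \pm 2, \pm 3$ being distinct and nonzero mod $n$, which holds for every $n \geq 7$ with no borderline cases actually requiring a patch (the only collision, $3 \equiv -3$, occurs at $n = 6$). Your counting argument for the necessity direction is also correct, and is in fact more complete than the paper, which only presents the construction and leaves the lower bound $n \geq 7$ implicit.
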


\begin{thm}
Topological configurations $(n_3)$ exist if and only if $n \geq 9$.
\label{thm:tconf}
\end{thm}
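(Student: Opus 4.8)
The statement has two halves: a lower bound ($n\ge 9$ is necessary) and a construction ($n\ge 9$ suffices). For the lower bound, the plan is to reduce to two small cases. By Theorem~\ref{thm:cconf} there is no combinatorial $(n_3)$ configuration when $n\le 6$, hence no topological one, so only $n=7$ and $n=8$ remain to be excluded. Here I would invoke the classical census of small configurations (see \cite{Grunbaum}): up to isomorphism the only combinatorial $(7_3)$ configuration is the Fano plane and the only combinatorial $(8_3)$ configuration is the M\"obius--Kantor configuration, so it suffices to show that neither is topologically realizable. My main lever is a pair of reductions. First, a topological $(n_3)$ configuration gives an arrangement of $n$ pseudolines in the extended Euclidean plane whose only multiple points are the $n$ triple points of the configuration (the pseudolines are noncontractible curves, so one may take each pair to cross exactly once). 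Second, by the stretchability theorem (Goodman--Pollack; see \cite{Grunbaum}) every arrangement of at most eight pseudolines is combinatorially equivalent to an arrangement of straight lines. Combining these, a topological $(7_3)$ or $(8_3)$ configuration would already be a \emph{geometric} one.

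The contradiction then comes from the fact that neither the Fano plane nor the M\"obius--Kantor configuration admits a geometric realization over $\mathbb{R}$. For the Fano plane I would run the standard coordinatization: the incidences force homogeneous coordinates satisfying $1+1=0$, so a realization can only exist over a field of characteristic $2$; equivalently, for $n=7$ one may cite the non-orientability of the Fano matroid together with the topological representation theorem, which rules out even a pseudoline realization directly. For the M\"obius--Kantor configuration I would fix four of its points in general position, use the incidences to locate the remaining four one at a time, and read off the resulting closing condition; it reduces to a polynomial equation whose roots are non-real roots of unity, so no realization over $\mathbb{R}$---hence none in the extended Euclidean plane---exists. With the reductions of the previous paragraph this forces $n\ge 9$.

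For the construction half, the plan is a short induction over a handful of explicit base cases. For $n=9$ and $n=10$ we have already displayed geometric---hence topological---realizations in this paper, namely the Pappus and Desargues configurations; adjoining one explicit geometric $(11_3)$ configuration gives base cases $n=9,10,11$. The inductive step is an extension lemma sending an $(n_3)$ configuration to an $((n+3)_3)$ configuration by a local modification: pick an incident point--line pair, split the line near the point, and graft in a triangle of three new points and three new lines so that each new point lies on three of the lines and each new line carries three points; one checks that the incidence counts are restored and, choosing the new points suitably, that the whole modification can be realized with straight lines. Iterating from the three base cases reaches every $n\ge 9$ (the residues $0,1,2$ modulo $3$ coming from $9,10,11$ respectively), and in fact yields the stronger statement that geometric $(n_3)$ configurations exist for all $n\ge 9$; if a one-step $n\to n+1$ extension is available it reduces everything to the single base case $n=9$.

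The step I expect to be genuinely delicate is the exclusion of $n=7,8$. The Fano part is routine, but the M\"obius--Kantor closing computation has to be carried out carefully, and---more importantly---the reduction of the topological question to the geometric one for at most eight pseudolines rests on the stretchability theorem, whose proof is nontrivial and sharp: it already fails at nine pseudolines, a non-Pappus arrangement being the standard counterexample. Making precise the passage from an arbitrary topological configuration to an honest pseudoline arrangement---controlling the behavior at infinity and the possibility of non-generic crossings---is the other point that requires care.
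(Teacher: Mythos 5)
The paper never proves this theorem: it states it and defers to \cite{Grunbaum}, explicitly summarizing proofs only of Theorems \ref{thm:cconf} and \ref{thm:gconf}. So there is no in-paper argument to compare against; judged on its own, your lower-bound half is essentially the standard (and correct) route. Reducing $n\le 6$ to Theorem \ref{thm:cconf}, invoking the uniqueness of the combinatorial $(7_3)$ and $(8_3)$ configurations, stretching any arrangement of at most eight pseudolines via Goodman--Pollack, and then killing the Fano and M\"obius--Kantor configurations by their non-realizability over $\mathbb{R}$ is exactly how this is done in the literature, and you correctly flag that stretchability is sharp at nine pseudolines. Your ``if'' direction, however, is far more elaborate than it needs to be: Theorem \ref{thm:gconf} (whose constructive proof via $\mathscr{C}(n,1,3)$ is given in Section \ref{sec:exist}) already produces a geometric, hence topological, $(n_3)$ configuration for every $n\ge 9$, so the entire induction with base cases $9,10,11$ and a $+3$ grafting step can be replaced by one sentence.

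If you do want to keep the extension lemma, it has a genuine gap as stated. A simple incidence count shows that ``split the line near the point,'' i.e.\ deleting a single point--line incidence, cannot work: with three new points and three new lines you would need eight incidences among them, forcing two distinct new lines to share all three new points. You must delete \emph{three} old incidences, after which each new line carries two new points and one old point and each new point lies on two new lines and one old line; and even then the claim that this graft ``can be realized with straight lines'' is asserted rather than proved and is the real content of such a lemma. Since the theorem you are proving is only topological, you could get away with a pseudoline realization of the graft, which is easier, but as written the step is not justified.
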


\begin{thm}
Geometric configurations $(n_3)$ exist if and only if $n \geq 9$.
\label{thm:gconf}
\end{thm}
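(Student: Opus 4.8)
The plan is to treat the two implications separately, spending essentially no effort on the ``only if'' direction and concentrating on an explicit construction for ``if''.

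For ``only if'', note that every geometric configuration is in particular a topological configuration, so Theorem~\ref{thm:tconf} already forbids a geometric $(n_3)$ for every $n \leq 8$. If one prefers an argument independent of Theorem~\ref{thm:tconf}: the values $n \leq 6$ are ruled out by Theorem~\ref{thm:cconf}; the unique combinatorial $(7_3)$ is the Fano configuration $PG(2,2)$, in which the three ``diagonal points'' of any complete quadrangle are forced to be collinear, so a realization over the reals would contradict the validity of Fano's axiom in $\mathbb{RP}^2$; and the unique combinatorial $(8_3)$ is the M\"obius--Kantor configuration, whose coordinatization (normalize four of the eight points to a projective frame and then propagate the incidences) forces a certain cross-ratio to satisfy an irreducible real quadratic, hence to be non-real.

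For ``if'', the idea is a finite list of base cases together with one soft induction step. As base cases I would produce a geometric $(n_3)$ for each $9 \leq n \leq 17$: the Pappus configuration handles $n = 9$ (see Figure~\ref{fig:pappus}), the Desargues configuration handles $n = 10$, and for $11 \leq n \leq 17$ one may invoke the explicit small-case realizations in \cite{Grunbaum} (or write down coordinates for suitable cyclic or near-cyclic configurations and check that no unintended collinearities appear). The induction step is the assertion: \emph{if a geometric $(m_3)$ configuration exists, then so does a geometric $((m+9)_3)$ configuration.} To prove it, place a copy of the given $(m_3)$ configuration and a copy of the Pappus $(9_3)$ configuration in the real projective plane and apply a generic $g \in PGL_3(\mathbb{R})$ to the Pappus copy. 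Each of the finitely many ``bad events'' --- a point of one copy lying on a line of the other, two of the points coinciding, two of the lines coinciding --- carves out a proper subvariety of the $8$-dimensional group $PGL_3(\mathbb{R})$, so a generic $g$ avoids all of them at once. The union then has $m + 9$ distinct points and $m + 9$ distinct lines, each point incident to exactly its original three lines and each line to exactly its original three points; taking the line at infinity to be a generic line (missing the $m+9$ points and distinct from the $m+9$ configuration lines) transfers the picture to the affine Euclidean plane. Writing $n = 9 + (n-9)$ with $n - 9 \geq 9$ for every $n \geq 18$ then closes the induction, and together with the base cases this yields a geometric $(n_3)$ configuration for all $n \geq 9$.

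The main obstacle is the base cases, not the induction. The disjoint-union step is essentially free, but by itself it cannot reach $n \in \{12, 13, \dots, 17\}$, since none of these is a sum of two integers each at least $9$. A fully self-contained proof must therefore exhibit, by explicit coordinates, at least one geometric $(n_3)$ configuration for each of these seven sizes and check that no accidental incidences occur, or else replace the finite list of base cases by a more delicate local surgery --- deleting a bounded number of points and lines from a known geometric configuration and re-inserting slightly more, all while preserving genericity of the non-incidences --- which is where the real content of the original $1882$-era proofs lies.
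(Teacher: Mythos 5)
Your overall architecture is sound but genuinely different from the paper's, and the difference is exactly where your gap sits. The paper does not induct at all: it realizes the single cyclic configuration $\mathscr{C}(n,1,3)$ uniformly for every $n \geq 9$ by an incremental placement algorithm (put point $2$ at the origin, point $4$ to its left, thread the path $2,3,\dots,n-2$ along lines of carefully chosen small slopes so that each new point lands on the one line it must lie on, then place $n-1$, $n$, $1$ last to close up the remaining incidences). Because one construction covers all $n$ simultaneously, there is no base-case bottleneck. Your disjoint-union-plus-genericity step is correct as far as it goes --- under Definition \ref{def:config} a configuration need not be connected, and the transversality argument in $PGL_3(\mathbb{R})$ does kill all finitely many bad incidences --- but it only reduces the theorem to the nine sizes $9 \leq n \leq 17$, and for $11 \leq n \leq 17$ you have not actually produced anything: ``invoke the explicit small-case realizations'' or ``write down coordinates and check that no unintended collinearities appear'' is precisely the content being asserted, not a proof of it. You correctly identify this yourself in your final paragraph, but identifying a gap does not close it; as written, seven of the nine base cases are missing, and those are the entire difficulty of the ``if'' direction once the union trick is in hand. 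To repair your proof in its own spirit, you could replace the missing base cases by the paper's construction applied to $\mathscr{C}(n,1,3)$ for $11 \leq n \leq 17$ --- at which point the induction becomes redundant and you have reproduced the paper's argument. A secondary, lesser issue: your construction yields disconnected configurations for all $n \geq 18$, which satisfies the letter of Definition \ref{def:config} but is strictly weaker than what the cyclic construction delivers (a connected realization for every $n$), and would not survive under the common convention that configurations are assumed connected.

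Your ``only if'' direction is fine and is in fact more explicit than anything in the paper, which states Theorems \ref{thm:cconf}--\ref{thm:gconf} without proving the impossibility half: reducing to Theorem \ref{thm:tconf}, or alternatively citing the non-realizability of the Fano plane over $\mathbb{R}$ and the complex coordinatization forced by the M\"obius--Kantor configuration, are both standard and correct.
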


Over the years, the theorems above have been proven in several different manners.  Here, we summarize the proofs given by Grunbaum
for Theorems \ref{thm:cconf} and \ref{thm:gconf}, as these will be useful for the remainder of this paper \cite{Grunbaum}.  To do so, we first introduce a necessary definition.\\

\begin{defn}
A \textbf{generalized cyclic configuration} is one that is illustrated using Table \ref{tab:gencycl}.  This table is denoted as $\mathscr{C}(n,a,b)$, $1 \leq a < b < n$

\begin{table}[h]
\centering
    \begin{tabular}{|c c c c c c c c c |}
    \hline
    $1$ & $2$ & $3$ & $4$ & $\hdots$ & $n-3$ & $n-2$ & $n-1$ & $n$\\ 
    \hline
    $1+a$ & $2+a$ & $3+a$ & $4+a$ & $\hdots$ & $n-3 + a$ & $n-2 + a$ & $n-1+a$ & $n+a$\\
    \hline
    $1+b$ & $2+b$ & $3+b$ & $4+b$ & $\hdots$ & $n-3 + b$ & $n-2 + b$ & $n-1+b$ & $n+b$\\
    \hline
    \end{tabular}
    \caption{}
\label{tab:gencycl}
\end{table}
\end{defn}

For $n \geq 7$, $\mathscr{C}(n,1,3)$ is a valid combinatorial configuration, as no two columns share the two or more of the same points, and each of the $n$ lines are incident with 3 points (and vice-versa).  This completes the proof of Theorem $\ref{thm:cconf}$.\\

To show Theorem $\ref{thm:gconf}$, we again consider $C = \mathscr{C}(n,1,3)$.  \\

We note that $C$ contains a path $2,3,4,...,n-2$, where each point is incident with the point immediately to the left and the right of it.  Moreover, for $1 \leq i < n-5$, the points at index $i, i+1,$ and $i + 3$ are those points which are collinear.  With these in mind, the points of this path can be placed as follows:
\begin{enumerate}
    \item Place point 2 at the origin, and point 4 somewhere to the left of point 2.
    \item Draw a line through 2 with small positive slope, and place point 3 on this line, such that it is close to 2, and the line $3,4$ has small negative slope.  Also place on this line point 5, such that the line $4,5$ has positive slope.
    \item On line $3,4$, place point 6 such that its $x$-coordinate is larger than the $x$-coordinate of point 5.
    \item On line $4,5$, place point 7 such that its $x$-coordinate is larger than the $x$-coordinate of point 6. 
    \item Proceed in a similar fashion until point $n-2$ is placed on the line $n-4, n-5$, such that its $x$-coordinate is larger than the $x$-coordinate of point $n-3$.  
\end{enumerate}
While the complete proof can be found in the aforementioned reference, for our purposes, it is enough to note that remaining three points $n-1$, $n$, and $1$ can always be placed to fulfil the necessary remaining incidences, which completes the proof of Theorem $\ref{thm:gconf}$.  
\newpage
\begin{exmp}
Below is the configuration $\mathscr{C}(12,1,3)$, generated using the algorithm above.
\begin{figure}[H]
    \centering
    \includegraphics[width=0.5\textwidth]{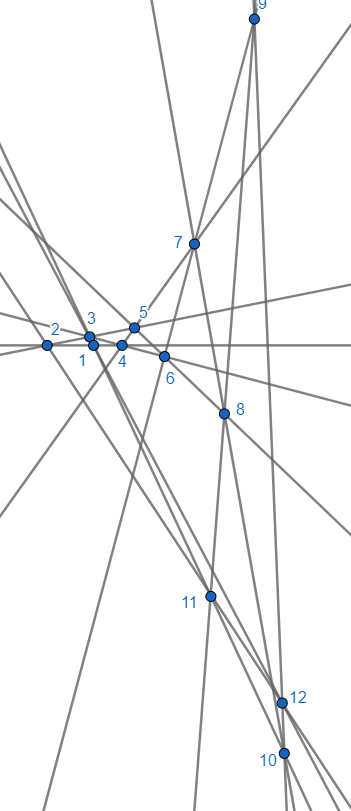}
    \caption{}
\end{figure}
\end{exmp}

\subsection{Generalizations of Section \ref{sec:exist}}\hfill\\

For this section, our motivating question is this:
\begin{quest}
For which values of $a$ and $b$ can the generalized cyclic table $\mathscr{C}(n,a,b)$ be used to prove Theorems \ref{thm:cconf} and \ref{thm:gconf}?
\end{quest}

The values of $a,b$ for which the combinatorial configuration $\mathscr{C}(n,a,b)$ exists have been known since the late 1800s (see,for instance, \cite{Brunel}).  However, we provide our own version of the proof below:

\begin{thm}\label{thm:cconf_exist}
For a fixed $n$ and the generalized cyclic table $C = \mathscr{C}(n,a,b), a < b$, \\if $b \not \in \{n - a, \frac{n+a}{2}, \frac{n}{2}+a, \frac{n}{2}, 2a\}$ and $a \not = \frac{n}{2}$, then $C$ is a combinatorial configuration. 
\end{thm}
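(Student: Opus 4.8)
The plan is to recast the table over the cyclic group $\mathbb{Z}/n\mathbb{Z}$ and reduce the claim to a statement that six elements of that group are pairwise distinct. First, identify the $n$ marks with $\mathbb{Z}/n\mathbb{Z}$ and read column $i$ of Table~\ref{tab:gencycl} as the block $B_i=\{i,\,i+a,\,i+b\}$. Because $1\le a<b<n$, the residues $0,a,b$ are pairwise distinct modulo $n$, so every column carries exactly three distinct marks and, dually, every mark $p$ lies in exactly the three columns $p,\ p-a,\ p-b$. Thus the incidence counts of Definition~\ref{def:config} hold automatically, and the only thing left to verify is that no two columns share two marks --- equivalently, that two distinct lines meet in at most one point (the condition already invoked for $\mathscr{C}(n,1,3)$).

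Next I would reduce this to a difference condition. For $d\in\{1,\dots,n-1\}$, a mark lies in both $B_i$ and $B_{i+d}$ exactly when $d\equiv x-y\pmod n$ for some $x,y\in\{0,a,b\}$, and two distinct representations $(x,y)$ give two distinct common marks (if the $x$'s agree then the $y$'s agree, as $0,a,b$ are distinct mod $n$). Hence $|B_i\cap B_{i+d}|$ equals the number of ways to write $d$ as a difference of two elements of $\{0,a,b\}$, and $\mathscr{C}(n,a,b)$ is a configuration if and only if every nonzero $d$ admits at most one such representation --- that is, if and only if the six elements
\[
a,\quad -a,\quad b,\quad -b,\quad b-a,\quad a-b
\]
of $\mathbb{Z}/n\mathbb{Z}$ (all nonzero mod $n$, again since $0<a<b<n$) are pairwise distinct.

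The proof then finishes by inspecting the $\binom{6}{2}=15$ possible coincidences among these six elements. Several of them are equivalent to $a\equiv 0$, $b\equiv 0$, or $a\equiv b\pmod n$, hence impossible under $1\le a<b<n$; each of the remaining ones reduces to one of
\[
2a\equiv 0,\qquad 2b\equiv 0,\qquad a+b\equiv 0,\qquad 2a\equiv b,\qquad a\equiv 2b,\qquad 2(b-a)\equiv 0\pmod n,
\]
which over the range $1\le a<b<n$ translate respectively into $a=\tfrac{n}{2}$, $b=\tfrac{n}{2}$, $b=n-a$, $b=2a$, $b=\tfrac{n+a}{2}$, and $b=\tfrac{n}{2}+a$. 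Since the hypothesis forbids all of these, every coincidence fails, the six differences are pairwise distinct, and by the previous step $\mathscr{C}(n,a,b)$ is a combinatorial configuration. (The same analysis gives the converse, so these conditions are necessary as well.)

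I do not expect a conceptual obstacle; the work is bookkeeping, and the one delicate point is the modular reduction in the last step. For example, $2a\equiv b\pmod n$ a priori also allows the wrap-around value $b=2a-n$, but that would force $a>n$, contradicting $a<b<n$, so excluding $b=2a$ genuinely suffices; similar care is needed to see that $a+b\equiv 0$, $2(b-a)\equiv 0$, and the rest really do pin down the stated values given $0<a,\ b,\ b-a<n$. Carrying out that verification cleanly is the crux of the write-up.
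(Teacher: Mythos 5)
Your proposal is correct and follows essentially the same route as the paper: both reduce the condition $|A\cap B|\le 1$ for distinct columns to the requirement that the six differences $\pm a,\pm b,\pm(b-a)$ be pairwise distinct in $\mathbb{Z}/n\mathbb{Z}$, and then translate each possible coincidence into one of the excluded values of $a$ or $b$. Your write-up is somewhat more systematic --- it checks all fifteen coincidences rather than three representative cases, notes the converse, and is explicit about the wrap-around in passing from congruences mod $n$ to equalities of integers --- but the underlying argument is the same.
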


\begin{proof}
Let $A, B$ be two rows of $C$, such that $A\not = B$.  Let $A = \{j, j + a, j+b\}$ and $B = \{k, k+a,k+b\}$.  By definition, in order for $C$ to be a combinatorial configuration, $|A \cap B|$ must be one or less.  Therefore, we consider the case when $C$ is not a combinatorial configuration (i.e. we consider the case when $|A \cap B| \geq 2$).  \\

Without loss of generality, as $|A\cap B| \geq 2$, there are three distinct cases we need to consider, which are outlined below.\\

\begin{itemize}
    \item If $j \equiv k + a\mods$, then either $j + a \equiv k \mods$ or $j + a \equiv k + b\mods$.  Rearranging, we get $j - k \equiv a \mods$ and $(j - k \equiv -a\mods$ or $ j - k \equiv b - a\mods)$.  Simplifying slightly, we get $a \equiv -a\mods$ or $a \equiv b-a\mods$.  Finally, this gives us $a = \frac{n}{2}$ or $b = 2a$.\\
    
    \item If $j+a \equiv k + b\mods$, then either $j + b \equiv k+a\mods$ or $j + b \equiv k\mods$.  Rearranging, we get $j - k \equiv b-a\mods$ and $(j - k \equiv a-b\mods$ or $j - k \equiv -b\mods)$.  Simplifying slightly, we get $b-a \equiv a-b\mods$ or $b-a \equiv -b\mods$.  Finally, this gives us $b = \frac{n}{2}$ or $b = \frac{a+n}{2}$. \\
    \item If $j+b \equiv k\mods$, then either $j \equiv k+b\mods$ or $j \equiv k+a\mods$.  Rearranging, we get $j - k \equiv -b\mods$ and $(j - k \equiv b\mods$ or $j - k \equiv a\mods$.  Simplifying slightly, we get $-b \equiv b\mods$ or $-b \equiv a\mods$.  Finally, this gives us $b = \frac{n}{2}$ or $b = n - a$.\\
\end{itemize}
And so, when $|A \cap B| \geq 2$, $b$ must be an element of $\{n - a, \frac{n+a}{2}, \frac{n}{2}+a, \frac{n}{2}, 2a\}$ or $ a = \frac{n}{2}$.\\

Therefore, when $b \not \in \{n - a, \frac{n+a}{2}, \frac{n}{2}+a, \frac{n}{2}, 2a\}$ and $a \not = \frac{n}{2}$, $|A \cap B| < 2$, indicating the table represents a combinatorial configuration, as claimed.  
\end{proof}

\begin{exmp}
We can confirm these results graphically in Figure \ref{fig:invalid}, by generating all invalid generalized cyclic tables, $8 < n < 50$.  For a fixed $n$, the points plotted are the integer values $a$ and $b$, $a < b$, which satisfy the equations $n = 2b - 2a, n = 2a, n = 2b, n = a + b, n = 2b - a, \text{ or } 2a = b$.  These results are discussed more in-depth in Section \ref{sec:centroid}.
    \begin{figure}[H]   
    \centering
     \includegraphics[width=.8\linewidth]{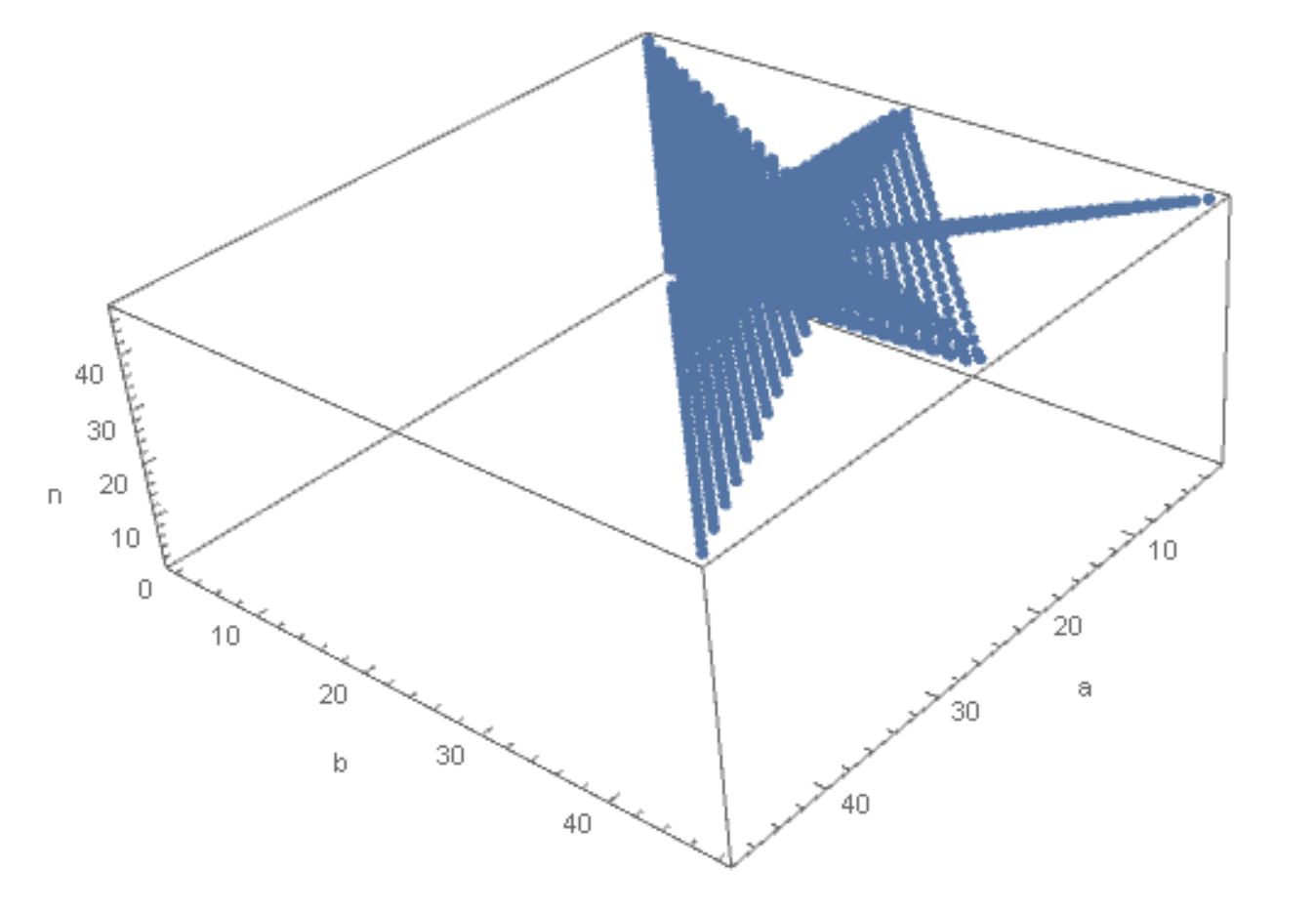}
     \caption{}
     \label{fig:invalid}
    \end{figure}

\end{exmp}

Regarding the results of Theorem \ref{thm:cconf_exist}, there is a necessary caveat to be made.  While any value $a,b$ that satisfies the criteria will generated a valid combinatorial configuration, this does not mean that any two pairs will immediately yield two non-isomorphic combinatorial configurations.  Indeed, isomorphism classes of generalized cyclic configurations can be approached as follows:
\begin{thm}[\cite{chiral}]\label{thm:cyciso}
Any two generalized cyclic configurations, $C_1$ and $C_2$, are isomorphic if and only if all values of $C_2$ are the same constant multiple of $C_1$ (mod $n$).
\end{thm}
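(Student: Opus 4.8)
The plan is to prove the two implications separately: the ``constant multiple $\Rightarrow$ isomorphic'' direction by writing down an explicit isomorphism, and the converse by showing that any isomorphism is forced to respect the cyclic structure up to an affine change of labels.

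For the easy direction, write $C_1 = \mathscr{C}(n,a_1,b_1)$, so that point $P_j$ lies on line $L_i$ exactly when $j - i \in \{0, a_1, b_1\} \pmod n$. If every value of $C_2$ equals $\lambda$ times the corresponding value of $C_1$ modulo $n$, then $\lambda$ must be a unit mod $n$ (otherwise the top row of $C_2$ is not a permutation of $1,\dots,n$), and after reindexing columns by $j \mapsto \lambda j$ one may take $C_2 = \mathscr{C}(n, \lambda a_1, \lambda b_1)$. I would then verify that the pair of maps $P_j \mapsto P_{\lambda j}$, $L_i \mapsto L_{\lambda i}$ is a bijection on points and on lines preserving incidence, since $j - i \in \{0, a_1, b_1\}$ iff $\lambda(j-i) \in \{0, \lambda a_1, \lambda b_1\}$; any translation of labels needed to match the normalization $a < b$ is itself a visible automorphism of $\mathscr{C}(n,a,b)$ and may be absorbed into this map.

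For the converse, let $\phi\colon C_1 \to C_2$ be an isomorphism and let $\rho_1,\rho_2$ denote the cyclic shift automorphisms ($P_j \mapsto P_{j+1}$, $L_i \mapsto L_{i+1}$). Then $\psi = \phi\rho_1\phi^{-1}$ is an automorphism of $C_2$ of order $n$ acting freely, hence regularly, on the $n$ points and the $n$ lines. The crux is the \emph{rigidity} claim that any such automorphism of $C_2$ equals $\rho_2^{k}$ for some $k$ with $\gcd(k,n)=1$. Granting this, $\phi\rho_1^i\phi^{-1} = \rho_2^{ki}$, and after replacing $\phi$ by $\rho_2^{t}\circ\phi$ for a suitable $t$ we may assume $\phi(L^{(1)}_0) = L^{(2)}_0$; equivariance then forces $\phi(L^{(1)}_i) = L^{(2)}_{ki}$ and $\phi(P^{(1)}_j) = P^{(2)}_{kj+m}$ for a fixed $m$ with $P^{(2)}_m \in L^{(2)}_0$, i.e.\ $m \in \{0,a_2,b_2\}$. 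Imposing incidence preservation yields $\{m, ka_1 + m, kb_1 + m\} = \{0, a_2, b_2\}$, so the block $\{0,a_2,b_2\}$ of $C_2$ is a translate of $k\cdot\{0,a_1,b_1\}$; read back through the normalization $a < b$, every value of $C_2$ is the constant multiple $k$ of the corresponding value of $C_1$.

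The main obstacle is the rigidity claim — equivalently, that the cyclic labelling of a generalized cyclic configuration cannot be altered by any isomorphism except via an affine map $x \mapsto kx + m$. I would approach it by passing to the collinearity graph of $C_2$, which is the circulant graph on $\mathbb{Z}/n$ with connection set $\pm\{a_2, b_2, b_2 - a_2\}$: $\phi$ induces a graph isomorphism and $\psi$ a graph automorphism, and one exploits the low valency of this circulant together with the numerical restrictions on $n,a,b$ from Theorem \ref{thm:cconf_exist} (which keep those six differences distinct and prevent degeneracies) to conclude that every configuration automorphism normalizing $\langle\rho_2\rangle$ is affine. This is exactly the point where the analysis of \cite{chiral} does the real work and where potential exotic (Ádám-type) isomorphisms of circulants must be excluded; once rigidity is established, the remainder is the bookkeeping sketched above.
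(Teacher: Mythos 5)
The paper never proves this statement---it is imported wholesale from \cite{chiral}---so there is no internal argument to compare yours against; the proposal must stand on its own. Your ``constant multiple $\Rightarrow$ isomorphic'' direction is correct and essentially complete: multiplication by a unit $\lambda$ permutes $\mathbb{Z}/n\mathbb{Z}$ and carries the block $\{j,j+a_1,j+b_1\}$ to $\{\lambda j,\lambda j+\lambda a_1,\lambda j+\lambda b_1\}$, and the translations and row swaps needed to restore the normalization $a<b$ are themselves visible automorphisms, exactly as the paper's subsequent corollary uses.

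The converse is where the proposal has a genuine gap: the entire content of that direction is funneled into the ``rigidity claim,'' and that claim \emph{is} the theorem. Announcing that one ``exploits the low valency of the circulant together with the numerical restrictions'' to rule out \'Ad\'am-type exotic isomorphisms is a plan, not an argument; this is precisely where such statements fail in nearby settings ($\mathbb{Z}/n\mathbb{Z}$ is not a CI-group for circulant graphs for general $n$, and exotic isomorphisms of cyclic Haar graphs do occur for some symbol sets), so the exclusion genuinely requires work special to three-element symbols, and none of it appears here. Two smaller issues: (i) your rigidity claim is stated more strongly than you need---you demand that every regular cyclic automorphism of order $n$ literally be a power of $\rho_2$, i.e.\ that $\langle\rho_2\rangle$ be the \emph{unique} regular cyclic subgroup of $\mathrm{Aut}(C_2)$, which is not obviously true; conjugacy of any two regular cyclic subgroups inside $\mathrm{Aut}(C_2)$ suffices, since the conjugating element can be absorbed into $\phi$ before your bookkeeping; (ii) the passage to the collinearity circulant is lossy (distinct blocks can induce the same triangle, and graph automorphisms need not lift to configuration automorphisms), so even a rigidity theorem for that circulant would need an extra step to come back to $C_2$. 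In short: the easy half is fine, the reduction of the hard half to a CI-type lemma is clean and correctly identifies where the difficulty lives, but the lemma itself---the substance of the theorem---is left unproved.
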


We explore the usefulness of such a theorem in the example below.
\begin{exmp}
It can be shown that, up-to-isomorphism, there exists only three $(9_3)$ combinatorial configurations (see, for instance, \cite{Grunbaum})  These three configuration tables are shown below.
\begin{table}[H]
\centering
    \begin{tabular}{|c c c c c c c c c|}
    \hline
    1 & 1 & 1 & 2 & 2 & 2 & 3 & 3 & 3\\ 
    \hline
    4 & 5 & 6 & 4 & 5 & 6 & 4 & 5 & 6\\
    \hline
    7 & 8 & 9 & 8 & 9 & 7 & 9 & 7 & 8\\
    \hline
    \end{tabular}
    \caption{The configuration $(9_3)_1$.}
\label{tab:931}

\centering
    \begin{tabular}{|c c c c c c c c c|}
    \hline
    1 & 1 & 1 & 2 & 2 & 2 & 3 & 3 & 4\\ 
    \hline
    3 & 4 & 5 & 4 & 5 & 6 & 5 & 6 & 7\\
    \hline
    7 & 6 & 8 & 8 & 7 & 9 & 9 & 8 & 9\\
    \hline
    \end{tabular}
    \caption{The configuration $(9_3)_2$.}
\label{tab:932}

\centering
    \begin{tabular}{|c c c c c c c c c|}
    \hline
    1 & 1 & 1 & 2 & 2 & 2 & 3 & 3 & 3\\ 
    \hline
    4 & 5 & 8 & 4 & 5 & 7 & 4 & 6 & 7\\
    \hline
    7 & 6 & 9 & 6 & 8 & 9 & 5 & 9 & 8\\
    \hline
    \end{tabular}
    \caption{The configuration $(9_3)_3$.}
\label{tab:933}
\end{table}
It can be easily shown that, by Theorem \ref{thm:cyciso}, $\mathscr{C}(9,1,3)$, $\mathscr{C}(9,2,6)$, $\mathscr{C}(9,3,4)$, $\mathscr{C}(9,5,6)$, $\mathscr{C}(9,3,7)$, and $\mathscr{C}(9,6,8)$ are isomorphic to each other.  Furthermore, by observation, we see that $\mathscr{C}(9,2,6)$ is isomorphic to the configuration $(9_3)_2$.
\begin{table}[H]
    \centering
    \begin{tabular}{|c c c c c c c c c|}
    \hline
    2 & 4 & 6 & 8 & 1 & 3 & 5 & 7 & 9\\ 
    \hline
    4 & 6 & 8 & 1 & 3 & 5 & 7 & 9 & 2\\
    \hline
    8 & 1 & 3 & 5 & 7 & 9 & 2 & 4 & 6\\
    \hline
    \end{tabular}
    \caption{The configuration $\mathscr{C}(9,2,6)$, which is isomorphic to both $\mathscr{C}(9,1,3)$ and $(9_3)_2$. }
\end{table}
\vspace{-5mm}
Indeed, all valid $\mathscr{C}(9,a,b)$ configurations can be shown to be isomorphic to one of these three configurations.  \\
\end{exmp}

Theorem \ref{thm:cyciso} is a powerful tool that can be used to generate isomorphic cyclic configurations.  However, for our purposes, the following corollary is far more useful:
\begin{cor}
Given two generalized cyclic configurations, $C_1 = \mathscr{C}(n,a_1,b_1)$ and $C_2 = \mathscr{C}(n,a_2,b_2)$, if there exists $ 1<z<n$ such that $b_2 \equiv zb_1$ (mod $n$) and $a_2 \equiv za_1$ (mod $n$), then $C_1$ and $C_2$ are isomorphic.  
\end{cor}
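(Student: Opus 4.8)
The plan is to derive this corollary directly from Theorem \ref{thm:cyciso}, which asserts that two generalized cyclic configurations are isomorphic precisely when the entries of one are a fixed constant multiple (mod $n$) of the entries of the other. So the whole task reduces to translating the hypothesis "$a_2 \equiv z a_1$ and $b_2 \equiv z b_1 \pmod n$ for some $1 < z < n$" into the statement "the table $C_2$ is obtained from $C_1$ by multiplying every entry by the constant $z$ mod $n$."

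First I would recall the explicit form of the tables. The configuration $C_1 = \mathscr{C}(n,a_1,b_1)$ has columns indexed by $i = 1,\dots,n$, with the $i$-th column consisting of the marks $i$, $i + a_1$, and $i + b_1$ (all mod $n$). Likewise $C_2 = \mathscr{C}(n,a_2,b_2)$ has $i$-th column $\{\,i,\ i + a_2,\ i + b_2\,\}$. Now consider the map $\mu_z$ on marks given by $x \mapsto zx \pmod n$. Applying $\mu_z$ entrywise to the $i$-th column of $C_1$ produces $\{\,zi,\ zi + za_1,\ zi + za_2 \cdot 0 \dots\,\}$ — more carefully, $\{\,zi,\ z(i+a_1),\ z(i+b_1)\,\} = \{\,zi,\ zi + za_1,\ zi + zb_1\,\}$. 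Using the hypothesis $za_1 \equiv a_2$ and $zb_1 \equiv b_2 \pmod n$, this is exactly $\{\,zi,\ zi + a_2,\ zi + b_2\,\}$, which is the $(zi \bmod n)$-th column of $C_2$. Hence $\mu_z$ carries the columns of $C_1$ bijectively onto the columns of $C_2$ (the column reindexing $i \mapsto zi$ being itself a bijection need not even be invoked if one simply notes the multiset of columns is preserved; but it is worth remarking $z$ being a unit mod $n$ makes $i \mapsto zi$ a permutation — though the corollary as stated only assumes $1 < z < n$, not $\gcd(z,n)=1$, so I would either add that as an implicit consequence of both tables being configurations, or simply lean on Theorem \ref{thm:cyciso} as a black box which already encodes whatever coprimality is needed).

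Therefore every entry of $C_2$ is the same constant multiple $z$ of the corresponding entry of $C_1$ modulo $n$, so Theorem \ref{thm:cyciso} applies verbatim and yields that $C_1$ and $C_2$ are isomorphic. That completes the argument.

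The only genuine subtlety — and the step I would flag as the main obstacle — is the bookkeeping around whether $z$ needs to be coprime to $n$ for the reindexing $i \mapsto zi$ to be a genuine bijection of columns, and hence for "all values of $C_2$ are the same constant multiple of $C_1$" to be literally true rather than merely true up to relabelling. If $\gcd(z,n) > 1$ then $\mu_z$ collapses distinct marks and $C_2$ could not be a configuration in the first place, so in practice the hypothesis "$C_2$ is a generalized cyclic configuration" forces $\gcd(z,n) = 1$; I would make this observation explicit in a single sentence and then let Theorem \ref{thm:cyciso} finish the job. No routine calculation beyond the one-line column computation above is required.
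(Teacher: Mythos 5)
Your proposal is correct and takes essentially the same route as the paper: both arguments reduce the claim to Theorem \ref{thm:cyciso} by observing that multiplying every entry of $C_1$ by $z$ modulo $n$ produces a table whose column offsets are $za_1 \equiv a_2$ and $zb_1 \equiv b_2$, i.e.\ the table $\mathscr{C}(n,a_2,b_2)$. If anything your write-up is cleaner, since you run the implication in the stated direction (scaling hypothesis $\Rightarrow$ isomorphism) and explicitly flag the $\gcd(z,n)$ issue, whereas the paper's proof as written starts from ``if $C_1$ and $C_2$ are isomorphic'' and derives the congruences, and never addresses invertibility of $z$.
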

\begin{proof}
By Theorem \ref{thm:cyciso}, if $C_1$ and $C_2$ are isomorphic, then for a column $A = \{j, j+a_1, j+b_1\} \subset C_1$, there exists column $B \subset C_2$ such that $B = \{zj, zj+za_1, zj+zb_1$, for $1 < z < n$. \\
Notice that $a_2 \equiv zj+za_1 - zj = za_1$ (mod $n$) and $b_2 \equiv zj+zb_1 - zj = zb_1$ (mod $n$), as claimed.\\

Given the convention that $a < b$, it may be necessary to swap the rows 2 and 3.  However, doing so will still yield an isomorphic configuration, and so we are done.
\end{proof}

Now, we turn our attention to the values of $a,b$ such that the generalized cyclic configuration can be geometrically realized.  Specifically, we focus on the values of $a,b$ such that this geometric realization can be done using the same proof by construction as that presented for Theorem \ref{thm:gconf}.  However, this, as it turns out, is not difficult to show, at least partially.  Indeed, for a fixed $n$, any such pair that generates an isomorphic configuration to $\mathscr{C}(n,1,3)$ will be geometrically realizable in this manner.  \\

For values $a,b$ that do not satisfy this criteria, it does not necessarily mean that this algorithm will not work, although we suspect it does not.  However, it has been shown that there are other ways to geometrically realize these types of configurations.  But first, some necessary definitions.
\begin{defn}
    A configuration $C$ is called \textbf{astral} if the number of orbits of points is equal to the number of orbits of lines
\end{defn}
\begin{defn}
    A configuration $C$ is called \textbf{chiral} if $C$ is astral, and is with only a cyclic group of symmetries.
\end{defn}

We conclude this section by highlighting a result by Berman et al:\\

\begin{thm}[\cite{chiral}]
For $n = 2m, m \in \mathbb{N}$, $\mathscr{C}(n,a,b)$ can be geometrically realized as a chiral astral configuration (with a few exceptions) if $a$ is even, $a < m$, $b$ is odd, and $0 < b < a$ or $m < b < a+m$. 
\end{thm}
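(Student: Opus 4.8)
The plan is to construct the realization explicitly as a point set carrying a visible $\mathbb{Z}/m\mathbb{Z}$ rotation, and then read off its incidences. First I would record the orbit structure. Write $n=2m$ and label the marks $0,1,\dots,n-1$; the map $i\mapsto i+2\pmod n$ is an automorphism of $\mathscr{C}(n,a,b)$ of order $m$, and since $a$ is even and $b$ is odd, each block $\{j,j+a,j+b\}$ contains two marks having the parity of $j$ and one mark of the opposite parity. Hence under $\langle i\mapsto i+2\rangle\cong\mathbb{Z}/m\mathbb{Z}$ the marks fall into exactly two orbits (evens and odds), the blocks into exactly two orbits (by the parity of $j$), and the incidence pattern between orbits is forced: an ``even'' block meets the even point-orbit twice and the odd point-orbit once, and dually. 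In particular any realization respecting this symmetry has two point orbits and two line orbits, hence is astral; the work is to produce one.

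Next I would fix the geometry. Place the even marks at the vertices $P_k=R(\cos(2\pi k/m),\sin(2\pi k/m))$, $k=0,\dots,m-1$, of a regular $m$-gon, and the odd marks at $Q_k=r(\cos(2\pi k/m+\varphi),\sin(2\pi k/m+\varphi))$, a concentric $m$-gon of radius $r$ turned by $\varphi$; up to scaling and an overall rotation the genuine parameters are the ratio $\rho=r/R$ and the offset $\varphi$. Rotation by $2\pi/m$ is a symmetry by construction, so it suffices to realize the incidences of one representative block in each orbit. The representative even block must be collinear with two prescribed outer vertices and one prescribed inner vertex, which is one equation $F(\rho,\varphi)=0$; the representative odd block gives a second equation $G(\rho,\varphi)=0$. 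The conditions $a<m$ and the prescribed interval for $b$ control which chords of the two polygons get drawn, i.e.\ the ``span'' of these chords, and are exactly what keeps the two polygons and their chords from degenerating.

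The main obstacle is step three: showing the system $F=G=0$ has a solution $(\rho,\varphi)$ in the admissible range at which the realization is non-degenerate, meaning the $2m$ points are distinct, every block meets the point set in exactly three marks (no accidental extra collinearity), and no block collapses to the centre or runs off to infinity. For existence I would follow the standard astral-configuration argument: reduce $F$ and $G$ to explicit trigonometric identities, parametrize the outer chord through the two prescribed even marks, impose that the prescribed odd mark lie on it, combine this with the dual condition to obtain a single one-variable equation, and apply an intermediate-value/continuity argument as the remaining parameter sweeps its interval. The ``few exceptions'' of the statement are precisely the triples $(n,a,b)$ for which every solution forces $\rho=1$ and $\varphi=0$ (the two polygons coincide), or makes a denominator vanish (a block through the centre, or blocks becoming parallel and meeting at infinity), or makes two marks coincide; these I would enumerate case by case, and this bookkeeping is where most of the length goes.

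Finally I would verify chirality. The configuration just built has $\mathbb{Z}/m\mathbb{Z}\le\mathrm{Sym}$; a reflection symmetry would have to carry the two concentric $m$-gons to themselves (or to each other, impossible since $\rho\neq1$ in the non-exceptional cases) in a way compatible with the block pattern, and because $a\neq b$ and the solution has $\varphi\neq0$ no such mirror line exists, so $\mathrm{Sym}$ is exactly the cyclic group $\mathbb{Z}/m\mathbb{Z}$. Together with the astrality observed in step one, this gives ``chiral astral,'' completing the argument. I expect steps one, two and four to be essentially formal, with the entire difficulty concentrated in the solvability-and-degeneracy analysis of step three.
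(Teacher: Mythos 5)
The paper does not actually prove this theorem---it is quoted from \cite{chiral} without proof---so there is no internal argument to compare against; your outline does, however, follow the same general strategy as the cited literature (two concentric regular $m$-gons carrying the even and odd marks, a $\mathbb{Z}/m\mathbb{Z}$ rotation induced by $i\mapsto i+2$, and a reduction to one representative block per orbit). Your step one is correct and essentially formal. The genuine gap is that the entire content of the theorem lives in your step three, and you have only gestured at it. The hypotheses $a<m$, $a$ even, $b$ odd, and $0<b<a$ or $m<b<a+m$ are not, as you suggest, mere non-degeneracy conditions ``keeping the chords from degenerating'': they are precisely the arithmetic conditions under which the one-variable equation obtained from $F=G=0$ is guaranteed to have a root (the relevant continuous function provably changes sign on the admissible interval), and the ``few exceptions'' are the parameter triples where that sign change fails or the root is degenerate. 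Without deriving the explicit trigonometric condition and showing why exactly those inequalities on $b$ force a crossing, you have not proved existence for the stated range of parameters, which is the theorem. A proof must make the dependence of solvability on $(a,b)$ explicit.

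There is also a flaw in your step four. Two concentric regular $m$-gons, one rotated by $\varphi$, \emph{always} admit a mirror line as a point set (the line at angle $\varphi/2$ reflects each $m$-gon onto itself), so ``$\varphi\neq 0$ hence no mirror line exists'' is false as stated. Chirality has to be argued at the level of the incidence structure: the reflection carries the realized block system of $\mathscr{C}(n,a,b)$ to a realization of $\mathscr{C}(n,-a,-b)$ (equivalently $\mathscr{C}(n,n-a,n-b)$), and one must check that for the parameters in question this reflected block system is not the original one, so the reflection is not a symmetry of the \emph{configuration} even though it is a symmetry of the point set. As written, your argument would let a dihedral symmetry group slip through, contradicting the ``chiral'' conclusion.
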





\section{Further Exploration of Generalized Cyclic Configurations}
In this section, we explore some results found as a result of the work done in the above sections, but that are not necessary useful in those endeavours.  

\subsection{Centroids and Generalized Cyclic Combinatorial Configurations\label{sec:centroid}}\hfill\\
\begin{exmp}
In the figures below, we plot the values of $(a,b)$ such that, for a fixed $n$, $\mathscr{C}(n,a,b)$ is not a valid combinatorial configuration.
\begin{figure}[H]  
\captionsetup[subfloat]{labelformat=empty}
\subfloat[$\mathscr{C}(40,a,b)$.]{\includegraphics[width=0.4\textwidth]{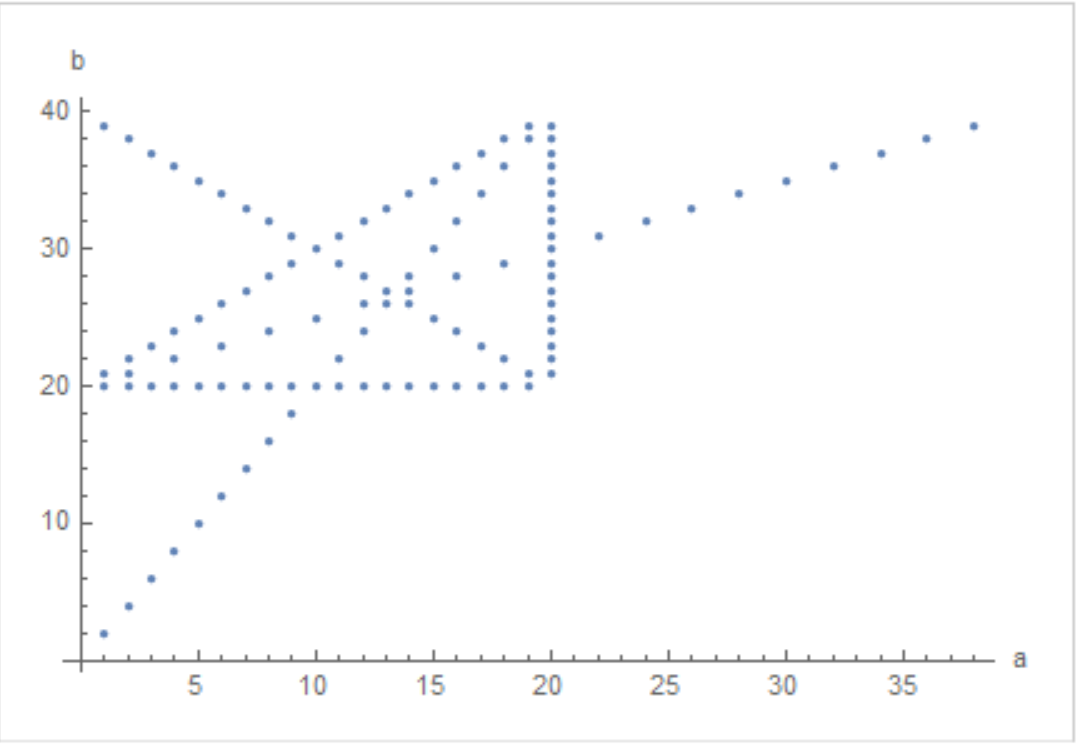}}
\subfloat[$\mathscr{C}(43,a,b)$.]{\includegraphics[width=0.4\textwidth]{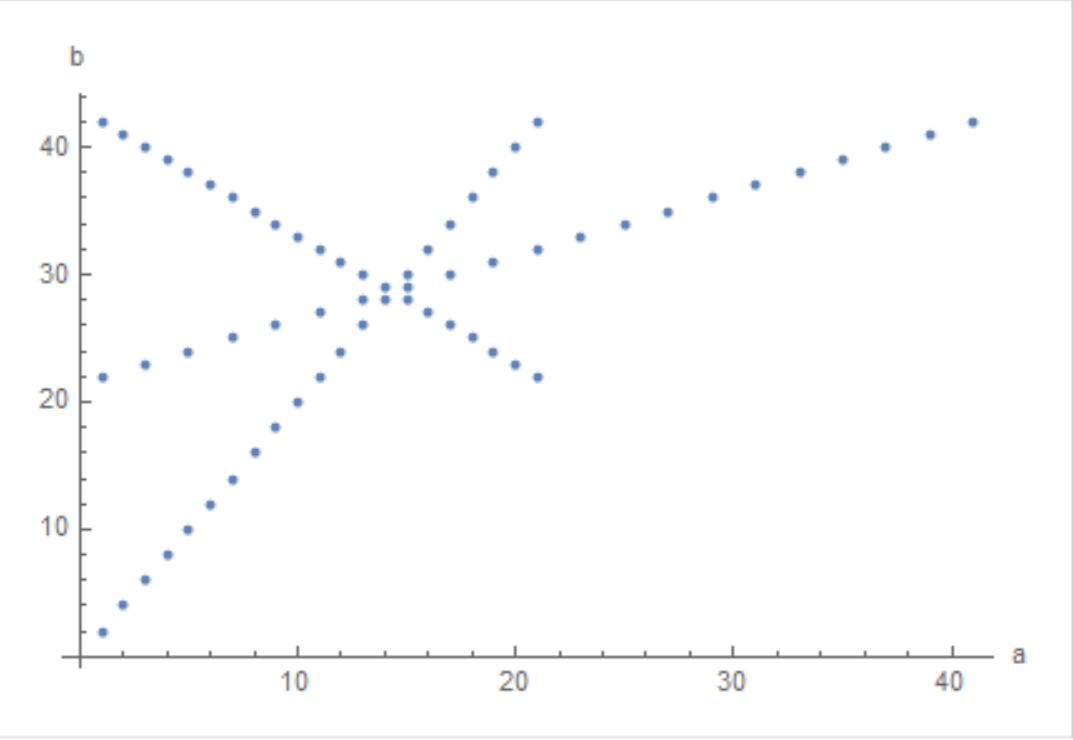}}
\end{figure}
\vspace{-5mm}
\begin{figure}[H]    
\captionsetup[subfloat]{labelformat=empty}
\subfloat[$\mathscr{C}(30,a,b)$.]{\includegraphics[width=0.45\textwidth]{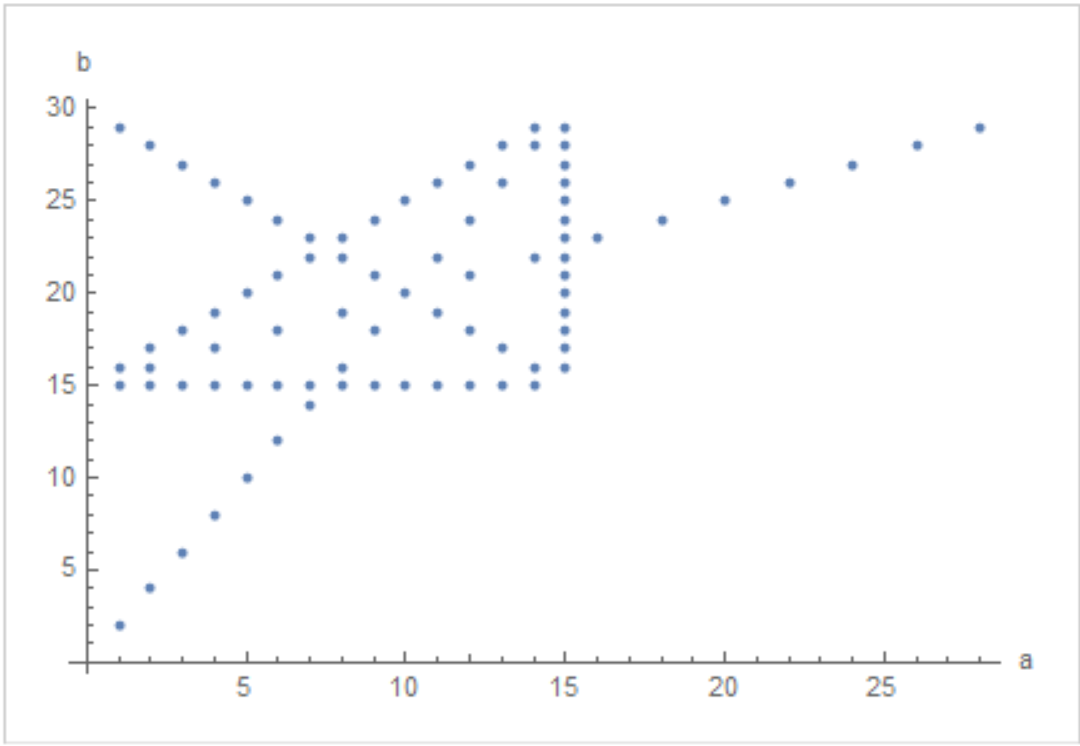}}
\end{figure}
From the examples above, it appears that the centroid of the triangle bounded by three lines is exactly the intersection of the remaining three lines.  Moreover, while not shown, the centroid $(a',b')$ yields the only configuration (for that specific $n$) such that the same three points appear on two distinct lines.  Put differently, the configuration $\mathscr{C}(n,a',b')$ is the only such configuration for this $n$ that contains two columns $A$ and $B$ whose intersection is size three.  
\end{exmp}

We attempt to explain this observation in the remainder of this section.\\
But first, we present a definition used several times throughout this explanation.
\begin{defn}
    Let $A$ and $B$ be columns of a configuration $C = \mathscr{C}$, and let $A[i]$ represent the $i$-th element of the line $A$.  We say that $A$ and $B$ are equal (or $A = B$) if, for all 1 $\leq j \leq 3, A[j] = B[j]$.
\end{defn}

Now, consider this restatement of Theorem \ref{thm:cconf_exist}:  
For a fixed $n$, if $(a,b)$ is a point on the line $n = 2b - 2a, n = 2a, n = 2b, n = a + b, n = 2b - a, \text{ or } 2a = b$, then $C' = \mathscr{C}(n,a,b)$ is not a configuration.

\vs
Using this as a starting point, we proceed with the following theorem:

\begin{thm}\label{thm:centroid}
For $n$ even, if $C' = \mathscr{C}(n,a,b)$, $b > a$ is not a combinatorial configuration, then the specific $(a',b')$ pair that yields $|A\cap B| = 3$, for $A,B \subset C', A \not = B$ is the centroid of the triangle bounded by the lines $n = 2b - 2a, n = 2a, n = 2b$. \\  Moreover, this point satisfies the equations $n = a + b, n = 2b - a, 2a = b$.
\end{thm}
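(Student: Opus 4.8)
The plan is to split Theorem~\ref{thm:centroid} into two logically separate tasks: (i) identify the unique $(a',b')$ that makes $|A\cap B|=3$ for two distinct columns $A,B$ of $C'$, and (ii) verify that this point is exactly the centroid of the triangle cut out by the three lines $n=2b-2a$, $n=2a$, $n=2b$. For task (i), I would revisit the case analysis from the proof of Theorem~\ref{thm:cconf_exist}. There, $|A\cap B|\ge 2$ forced one of the five linear conditions on $(a,b)$; to get $|A\cap B|=3$ we need \emph{all three} elements of $A=\{j,j+a,j+b\}$ to coincide (as a set) with $B=\{k,k+a,k+b\}$ for some $k\not\equiv j$. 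Since the columns are not equal, the matching between the two triples must be a nontrivial permutation, and because the offsets $0,a,b$ are in increasing order while the target offsets are also $0,a,b$ up to the shift $k-j$, the only way to realize a size-$3$ intersection is to have $\{j,j+a,j+b\}$ be a coset of a cyclic subgroup, i.e. $\{0,a,b\}$ is invariant (as a subset of $\mathbb{Z}/n\mathbb{Z}$) under translation by $d=k-j\not\equiv 0$. Writing out that $\{0,a,b\}+d=\{0,a,b\}\pmod n$ and chasing which element maps to which gives a small finite system; solving it yields $a=d$, $b=2d$, and $3d\equiv 0$, hence (for $n$ even, so that the relevant divisibilities behave) $a'=n/3$ and $b'=2n/3$ — or more precisely the pair forced by simultaneously demanding membership on several of the five lines at once.

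Next, for task (ii), I would simply compute the centroid of the triangle with edge-lines $L_1:\ n=2b-2a$, $L_2:\ n=2a$, $L_3:\ n=2b$ in the $(a,b)$-plane. The three pairwise intersections are found by solving pairs of these linear equations; e.g. $L_2\cap L_3$ gives $a=b=n/2$, $L_1\cap L_2$ and $L_1\cap L_3$ give the other two vertices. Averaging the three vertices coordinatewise produces a candidate centroid, and I would check it equals the $(a',b')$ from task (i). Finally, to get the ``moreover'' clause, I would substitute $(a',b')$ into each of $n=a+b$, $n=2b-a$, and $b=2a$ and verify all three hold identically; this is a direct arithmetic check once $a'$ and $b'$ are known explicitly in terms of $n$.

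The main obstacle I anticipate is task (i): making rigorous the claim that $|A\cap B|=3$ \emph{forces} a unique $(a',b')$, rather than merely being \emph{implied by} lying on one of the five lines. The subtlety is that the five-line condition in Theorem~\ref{thm:cconf_exist} is only necessary for $|A\cap B|\ge 2$, not sufficient for $|A\cap B|=3$; I need to run the matching-of-triples argument carefully, tracking which of the $0,a,b$ offsets in $A$ maps to which in $B$, and rule out the configurations that give only $|A\cap B|=2$. A clean way to organize this is to note that $|A\cap B|=3$ means the translate $A=B+d$ for some fixed $d\not\equiv0$, so the difference multiset $\{x-y:x,y\in\{0,a,b\}\}$ must be invariant under adding $d$, which pins down $a,b,d$ up to the cyclic structure. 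Care is needed with the hypothesis $n$ even (and implicitly $3\mid n$ for the centroid to have integer coordinates), and I would state explicitly where that is used; if $3\nmid n$ the ``centroid'' is not a lattice point and the statement should be read as an identity of rational points in the $(a,b)$-plane rather than an assertion about an actual configuration.
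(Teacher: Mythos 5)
Your proposal is correct and follows essentially the same route as the paper: the paper isolates your task (i) as Lemma \ref{lem:intersect} (showing $|A\cap B|=3$ forces $a'=n/3$, $b'=2n/3$ via the translation/cyclic-matching argument you describe), then computes the triangle's vertices and centroid and checks the three remaining line equations exactly as in your task (ii). Your observation that the integrality of the centroid really requires $3\mid n$ (so $6\mid n$ given $n$ even) is a point the paper only acknowledges in its closing summary of the section.
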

\begin{proof}

By Lemma \ref{lem:intersect}, $|A \cap B| = 3$ implies that $a' = \frac{n}{3}$ and $b' = \frac{2n}{3}$. \\ For a fixed $n$, the lines $n = 2b - 2a, n = 2a, n = 2b$ form a right triangle.  It follows that the coordinates of the three vertices of this triangle are $(0,\frac{n}{2}), (\frac{n}{2},\frac{n}{2})$ and $(\frac{n}{2},n)$.  We can then calculate the centroid to be $(\frac{n}{3}, \frac{2n}{3})$.  The coordinates are exactly the values of $a'$ and $b'$.
Additionally, it is obvious that the point $(a',b')$ is exactly the intersection of the lines $n = a + b, n = 2b - a, 2a = b$.
\end{proof}

\begin{lem}\label{lem:intersect}
Let $A,B$ both be columns of $\mathscr{C}(n,a,b)$, $b > a$, such that $A \not = B$. \\ Then, for all $A$, there exists $B$ such that $|A\cap B| = 3$ if and only if $3|n \text{ and } a = \frac{n}{3}$ and $b = \frac{2n}{3}$
\end{lem}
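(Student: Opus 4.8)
The plan is to characterize exactly when two distinct columns of $\mathscr{C}(n,a,b)$ can share all three of their entries, and then see which $(n,a,b)$ allow this to happen. Write $A = \{j, j+a, j+b\}$ and $B = \{k, k+a, k+b\}$ with $j \not\equiv k \pmod n$ (this is what $A \neq B$ means as sets, since equal starting index forces equal columns). Saying $|A \cap B| = 3$ means the two three-element sets coincide, i.e. $B$ is a cyclic shift of the pattern $\{0,a,b\}$ that lands back on $\{j,j+a,j+b\}$. Set $t = k - j \not\equiv 0$; then the condition is $\{0,a,b\} \equiv \{t, t+a, t+b\} \pmod n$. Since $0$ must lie in the right-hand set, either $t \equiv 0$ (excluded), $t+a \equiv 0$, i.e. $t \equiv -a$, or $t+b \equiv 0$, i.e. $t \equiv -b$. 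I would run through these two surviving cases, in each case matching the remaining two elements, and read off the forced relations among $n$, $a$, $b$.

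In the case $t \equiv -a$, the set $\{t,t+a,t+b\} = \{-a, 0, b-a\}$, so we need $\{0,a,b\} = \{0,-a,b-a\} \pmod n$; matching the nonzero elements gives either ($a \equiv -a$ and $b \equiv b-a$) — forcing $a \equiv 0$, excluded since $1 \le a$ — or ($a \equiv b-a$ and $b \equiv -a$), i.e. $b \equiv 2a$ and $b \equiv -a$, hence $3a \equiv 0 \pmod n$. In the case $t \equiv -b$, the set is $\{-b, a-b, 0\}$, and matching against $\{0,a,b\}$ gives either ($a \equiv -b, b \equiv a-b$) $\Rightarrow 2b \equiv a \equiv -b \Rightarrow 3b \equiv 0$, together with $a \equiv -b$; or ($a \equiv a-b, b \equiv -b$) $\Rightarrow b \equiv 0$, excluded. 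So every possibility collapses to: $3a \equiv 0 \pmod n$ and $b \equiv 2a \pmod n$ (equivalently $3b \equiv 0$ and $a \equiv -b \equiv 2b$, which is the same relation). Combined with the standing requirement $1 \le a < b < n$, the only way $3a \equiv 0 \pmod n$ with $a$ in range and $b = 2a < n$ is $a = n/3$, $b = 2n/3$, which in particular forces $3 \mid n$. This proves the forward direction; moreover it shows that \emph{for a fixed $j$} the matching column $B$ is the one with $k \equiv j - a \equiv j + b \pmod n$, so such a $B$ exists for every $A$.

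Conversely, if $3 \mid n$, $a = n/3$, $b = 2n/3$, then for any column $A = \{j, j+n/3, j+2n/3\}$ the column $B$ starting at $k = j + n/3$ satisfies $B = \{j + n/3, j+2n/3, j+n\} = \{j, j+n/3, j+2n/3\} = A$ as sets, with $k \not\equiv j$, so $|A \cap B| = 3$ and $A \neq B$ as columns. This gives the reverse implication and completes the proof. The only mildly delicate point — the ``main obstacle'' such as it is — is bookkeeping the set-equality cases carefully enough to be sure that the degenerate subcases ($a \equiv 0$ or $b \equiv 0$) really are the only ones discarded, and that one does not silently assume $a,b,b-a$ are pairwise distinct mod $n$; handling the coincidences (e.g. $a \equiv b - a$) is exactly what produces the surviving relation, so those should be tracked explicitly rather than assumed away.
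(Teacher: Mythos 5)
Your proposal is correct and follows essentially the same route as the paper: equate the two columns as sets modulo $n$, derive $3\mid n$ with $a=n/3$, $b=2n/3$, and prove the converse by shifting a column by $n/3$. Your forward direction is in fact more careful than the paper's, which asserts a single possible matching of the three entries (justified only by ``$1<k$ and $a<b$'') where you enumerate all matchings and discard the degenerate ones explicitly.
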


\begin{proof}
We begin by proving the forward implication.
Let $A = [1,1+a,1+b]$.  \\Choose a $B = [k,k+a,k+b]$, for some $1  < k \leq n$, such that $|A \cap B | = 3.$
As $1<k$ and $a<b$, the intersection of $A$ and $B$ is size 3 only if $[1 \equiv k+a \land 1+a \equiv k+b \land 1 + b \equiv k]$ (all mod $n$).
Simplifying, we obtain $[a = n - b \land a = b - a]$.  Since $a,b,n \in \mathbb{N}$, it immediately follows that $3 | n$ , and $a = \frac{n}{3}, b = \frac{2n}{3}$, as claimed.
\\To prove the reverse implication, suppose that $3 | n$ and $a = \frac{n}{3}$, $b = \frac{2n}{3}$.  Let $C \in  \mathscr{C}(n,a,b)$, i.e. $C = [q, q + a, q + b] = [q, q + \frac{n}{3}, q + \frac{2n}{3}]$.  Construct another line $D$ which contains all points of $C$ shifted by $\frac{n}{3}$, i.e. $D = [q + \frac{n}{3}, q + \frac{2n}{3}, q + n]$.  Recognizing all operations are mod $n$, and letting $r = q + \frac{n}{3}$, we obtain $D = [r, r + a, r + b]$, which is an element of $\mathscr{C}(n,a,b)$.  Since $C \not = D$, but $|C \cap D| = 3$, we are done.
\end{proof}

\begin{cor}
It follows from Theorem \ref{thm:centroid} (and the subsequent lemma) that, for a specific $n = 2m$, $m \in \mathbb{N}$, every point $(a,b)$ (excluding the centroid) gives rise to a generalized cyclic table $C' = \mathscr{C}(n,a,b)$ such that, for $A, B \subset C'$, $|A\cap B| = 2$. \qedsymbol
\end{cor}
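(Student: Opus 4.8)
The plan is to combine the trivial bound $|A \cap B| \leq 3$ — which holds because each column of $\mathscr{C}(n,a,b)$ has exactly three entries — with the sharp characterization of the equality case supplied by Lemma \ref{lem:intersect}. Throughout I read the statement as concerning those pairs $(a,b)$ for which $C' = \mathscr{C}(n,a,b)$ fails to be a combinatorial configuration; this is the context established by the preceding example and by Theorem \ref{thm:cconf_exist}.

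First I would observe that if $C'$ is not a combinatorial configuration, then by Definition \ref{def:config} there exist distinct columns $A, B \subset C'$ with $|A \cap B| \geq 2$; fix such a pair. Next I would rule out $|A \cap B| = 3$: by Lemma \ref{lem:intersect}, the existence of distinct columns of $\mathscr{C}(n,a,b)$ meeting in three points forces $3 \mid n$, $a = \tfrac{n}{3}$, and $b = \tfrac{2n}{3}$; and since $n = 2m$ is even, Theorem \ref{thm:centroid} identifies exactly this point $(\tfrac{n}{3}, \tfrac{2n}{3})$ as the centroid of the triangle bounded by $n = 2b - 2a$, $n = 2a$, $n = 2b$. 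Hence, provided $(a,b)$ is not the centroid — in particular whenever $3 \nmid n$, in which case Lemma \ref{lem:intersect} shows no such triple-intersecting pair exists at all — we have $|A \cap B| \neq 3$. Combining $2 \leq |A \cap B| \leq 3$ with $|A \cap B| \neq 3$ gives $|A \cap B| = 2$, as claimed.

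The only real obstacle here is one of precision rather than depth: one must be careful to interpret ``every point $(a,b)$ (excluding the centroid)'' as ranging over the pairs producing non-configurations, and to note that when $3 \nmid n$ the centroid fails to have integer coordinates, so the exclusion is vacuous and every such invalid table automatically has maximal column-intersection exactly $2$. With that clarification in place, the argument is a short syllogism built on Lemma \ref{lem:intersect} and Theorem \ref{thm:centroid}.
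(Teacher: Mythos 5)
Your argument is correct and is exactly the deduction the paper intends: the corollary is stated with no written proof (just a \qedsymbol), treating it as immediate from Lemma \ref{lem:intersect} (size-$3$ intersections occur only at $a=\frac{n}{3}$, $b=\frac{2n}{3}$) and Theorem \ref{thm:centroid} (that point is the centroid), so that any non-configuration pair $(a,b)$ other than the centroid has maximal column-intersection exactly $2$. Your added remarks — that $(a,b)$ must be read as ranging over the non-configuration pairs, and that the exclusion is vacuous when $3\nmid n$ — are clarifications the paper leaves implicit, not deviations from its approach.
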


Before moving on, we summarize the results of this section.  For a fixed $n$:
\begin{itemize}
    \item From Lemma \ref{lem:intersect}, if $3|n$, the intersection $(a',b')$ of the lines $n = a + b, n = 2b - a, 2a = b$ determines the values $a = a'$ and $b = b'$ which cause $\mathscr{C}(n,a,b)$ to have at least two columns with size three intersection.
    \item From Theorem \ref{thm:centroid}, if $6|n$, the centroid $(a',b')$ of the triangle bounded by the lines $n = 2b - 2a, n = 2a, n = 2b$ determines the values $a = a'$ and $b = b'$ which causes $\mathscr{C}(n,a,b)$ to have at least two columns with size three intersection.
\end{itemize}

\subsection{Generation of Geometric Configurations}\hfill\\

Unsurprisingly, due to the nicheness of this topic, there, from what we could tell, does not exist any specific software tools that would useful when generating geometric configurations. And so, we attempted to make one.  Using Python and relying heavily on the \textit{pygame} package, we designed an animation tool\footnote{Found here: https://github.com/nipayne/Configurations} that would allow for the geometric generation of any cyclic configuration $\mathscr{C}(n,1,3)$, $n \geq 9$.  This script was designed to serve two primary purposes:
\begin{itemize}
    \item First, this was intended to be an education tool.  Grumbaum's algorithm is sometimes difficult to parse, and it very difficult to implement when $n$ is large.
    \item Second, this was intended to lay the foundations and allow for construction techniques using other algorithms.  
\end{itemize}

As an added bonus, the way this tool was designed, configurations can be drawn by hand.  While this can be done in pre-existing software packages such as GeoGebra, we found that these were not as intuitive.  

\begin{figure}[H]
\subfloat[The start of a free-drawn configuration.]{\includegraphics[width=0.5\textwidth]{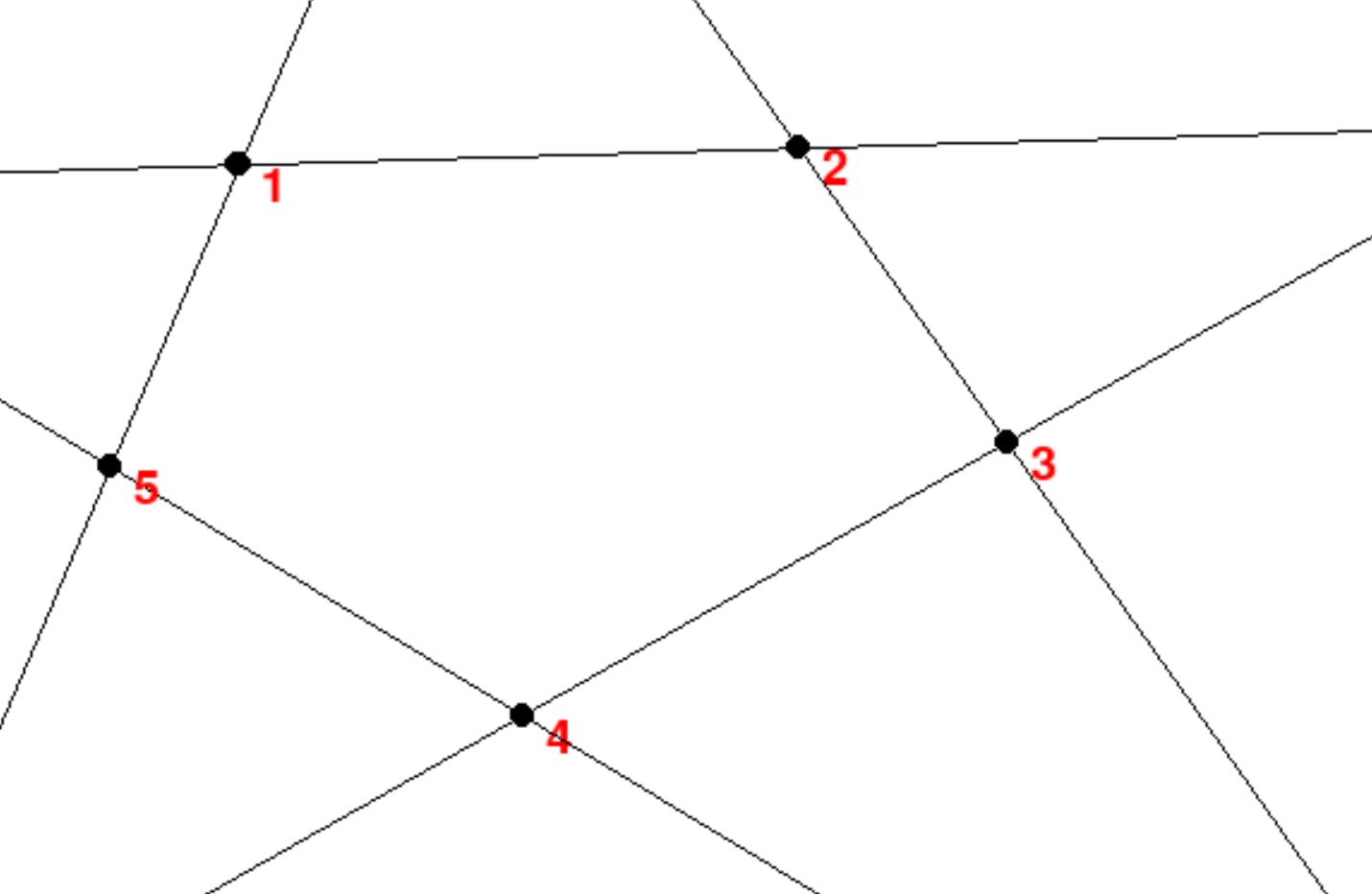}}
\subfloat[The geometric realization of $\mathscr{C}(9,1,3)$.]{\includegraphics[width=0.5\textwidth]{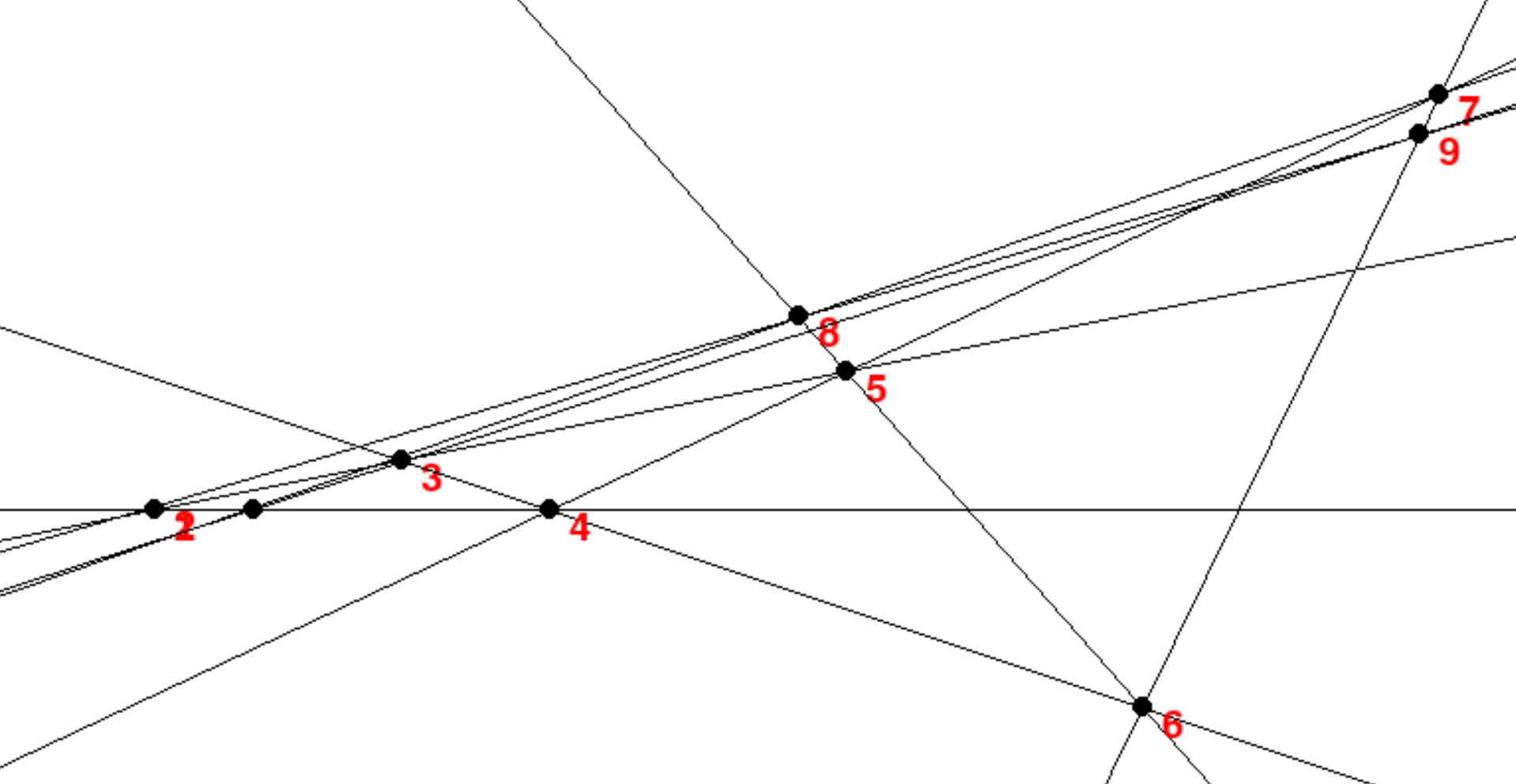}}\caption{}
\end{figure}
However, due to time constraints, there are many improvements and expansions that could be made.  We outline a few below.
\begin{itemize}
    \item For each $n$, the current way of generating the last three points, $n-1, n$ and $1$, is by randomly selecting positions on each of the appropriate lines, until each of the collinearity conditions are satisfied.  As the configuration increases in size, the probability of selecting these three points decreases exponentially, which is not ideal. This process needs significant refinement before large values of $n$ can be geometrically generated.
    \item As mentioned above, other algorithms could be implemented to generate other geometric realizations, such as those for $h$-astral configurations.
\end{itemize}

\medskip
\printbibliography

\end{document}